\numberwithin{equation}{section}
\newtheorem{theorem}{Theorem}[section]
\newtheorem{proposition}[theorem]{Proposition}
\newtheorem{corollary}[theorem]{Corollary}
\newtheorem{lemma}[theorem]{Lemma}
\newtheorem{theoremA}{Theorem}
\newtheorem{corollaryB}{Corollary}
\theoremstyle{definition}
\newtheorem{definition}[theorem]{Definition}
\newtheorem{remark}[theorem]{Remark}
\newcommand{\norm}[1]{\left\|#1 \right\|}
\title{Remarks on approximability and stability for groups}
\author{Vadim Alekseev}
\address{Vadim Alekseev, TU Dresden, 01062 Dresden, Germany}
\email{vadim.alekseev@tu-dresden.de}
\author{Andreas Thom}
\address{Andreas Thom, TU Dresden, 01062 Dresden, Germany}
\email{andreas.thom@tu-dresden.de}
\date{\today}
\begin{document}

\begin{abstract}
In this paper, we provide several instances in which interesting approximation and stability properties are inherited by quotients with respect to finitely generated normal subgroups or, more strongly, normal subgroups with Kazhdan’s property (T). Applications arise when these observations are combined with variations of the Rips construction due to Wise and Belegradek--Osin.
\end{abstract}

\maketitle

\tableofcontents

\section{Introduction}

The interplay between geometric group theory, approximation and stability
properties, and non-commutative harmonic analysis produces a vast range of phenomena.
A group is called sofic if it admits asymptotic permutation models \cite{PestovBriefGuide} and is called stable in finite actions \cite{GohlaThom24} if any sofic approximation is weakly contained in finite actions in the sense of Kechris \cite{Kechris12}.
Flexible P-stability formalizes the possibility of correcting asymptotic
permutation models \cite{BL20}, while residual finite-dimensionality (RFD) of the full
group $C^*$-algebra records how effectively a group can be detected by
finite-dimensional unitary representations, see \cite{BrownOzawa}.
Gromov hyperbolic groups sit at an interface where these themes can be combined
with variants of the Rips construction, see \cite{BelegradekOsin08, Rip82, WiseRipsRF}. This way arbitrary finitely presented groups can be represented as quotients of hyperbolic groups by controlled normal subgroups.
Throughout we consider short exact sequences of countable groups
\begin{equation}\label{eq:ses}
1 \longrightarrow N \longrightarrow G \xrightarrow{\pi} Q \longrightarrow 1,
\end{equation}
so that $Q=G/N$ is the quotient.
When $N$ is finitely generated, approximate local triviality of $N$ in the Hamming
metric forces $N$ to act trivially on a large subset.
When $N$ is Kazhdan, approximate invariance of a finite number of atoms in a p.m.p.\ action or a finite number of vectors in a unitary representation
can be promoted to genuine invariance.
These elementary ingredients yield the following implications for
extensions with finitely generated resp.\ Kazhdan normal subgroup.

\begin{theoremA}\label{thm:A}
Let
\[
1\longrightarrow N\longrightarrow G\xrightarrow{\pi} Q\longrightarrow 1
\]
be a short exact sequence with $N$ finitely generated. Then:
\begin{enumerate}[label=$(\roman*)$]
\item if $G$ is flexibly P-stable, then $Q$ is flexibly P-stable;
\item if $N$ is Kazhdan, $G$ is residually finite, stable in finite actions and $Q$ is
sofic, then $Q$ is residually finite;
\item  if $N$ is Kazhdan and $G$ is RFD, then $Q$ is RFD;
\item if $N$ is Kazhdan and the canonical p.m.p.~action
$G\curvearrowright\{0,1\}^Q$ is sofic, then $Q$ is sofic.
\end{enumerate}
\end{theoremA}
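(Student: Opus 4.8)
The plan is to isolate the two elementary mechanisms advertised before the theorem and then feed them into the four parts. I would first record two lemmas. \emph{Local triviality:} if $N=\langle s_1,\dots,s_k\rangle$ acts genuinely on a finite set $X$ via $\alpha$ with $\max_i d_H(\alpha(s_i),\mathrm{id})\le\varepsilon$, then the common fixed set $F=\{x:\alpha(s_i)x=x\ \forall i\}$ has normalized cardinality $\ge 1-k\varepsilon$; since a common fixed point of the generators is fixed by the whole subgroup they generate, $F$ is a genuine $N$-fixed set, and by normality of $N$ in $G$ it is invariant under any extension of $\alpha$ to $G$. \emph{Kazhdan upgrade:} if $N$ is Kazhdan with Kazhdan constant $\kappa$ for $\{s_i\}$, then in any unitary representation (or p.m.p.\ action) a unit vector (or finite family of atoms) that is $\delta$-invariant with $\delta<\kappa$ lies within a controlled distance of the invariant subspace, by the spectral gap. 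For part $(i)$ I would pull an asymptotic homomorphism $\phi_n\colon Q\to\mathrm{Sym}(X_n)$ back to $\psi_n=\phi_n\circ\pi$, an asymptotic homomorphism of $G$ with $\psi_n|_N\equiv\mathrm{id}$; flexible P-stability of $G$ corrects $\psi_n$ to a genuine homomorphism $\rho_n\colon G\to\mathrm{Sym}(Y_n)$ with $Y_n\supseteq X_n$ Hamming-close to $\psi_n$. Then $\rho_n(s_i)$ is Hamming-close to $\mathrm{id}$, so local triviality yields a large $\rho_n(G)$-invariant set $F_n$ on which $N$ acts trivially; the induced genuine homomorphism $\bar\rho_n\colon Q\to\mathrm{Sym}(F_n)$ is flexibly close to $\phi_n$, establishing flexible P-stability of $Q$.

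For part $(iii)$ the difficulty is that residual finite-dimensionality is not inherited by arbitrary quotients, so property (T) must do the work. Given $y\in C^*(Q)$ I would realize its norm by a representation $\sigma$ of $Q$ and a unit vector $\eta$; inflating along $\pi$ gives a state $\phi(a)=\langle(\sigma\circ\pi)(a)\eta,\eta\rangle$ on $C^*(G)$ with $\phi(s_i)=1$ for the generators $s_i$ of $N$. RFD of $G$ produces finite-dimensional representations $\rho_m$ with vector states $\phi_m\to\phi$ weak-$*$; evaluating at $s_i$ and using $\|\rho_m(s_i)\xi_m-\xi_m\|^2=2-2\,\mathrm{Re}\,\phi_m(s_i)\to 0$ shows the norming vectors $\xi_m$ are asymptotically $N$-invariant. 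The Kazhdan upgrade then places $\xi_m$ within $o(1)$ of the $N$-fixed subspace $H_m^N$, which is $\rho_m(G)$-invariant because $N\trianglelefteq G$; compressing $\rho_m$ to $H_m^N$ yields a finite-dimensional representation $\rho_m^Q$ of $Q$, and since $H_m^N$ is $G$-invariant the action of any lift $\tilde y$ of $y$ satisfies $\rho_m^Q(y)\xi_m'=\rho_m(\tilde y)\xi_m'$. Hence $\|\rho_m^Q(y)\|\ge\|\rho_m(\tilde y)\xi_m'\|\to\|y\|$, so finite-dimensional representations of $Q$ norm $y$ and $C^*(Q)$ is RFD.

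For part $(iv)$ the crucial observation is that $N$ acts trivially on the index set $Q$, hence on $\{0,1\}^Q$, so the canonical action is the inflation along $\pi$ of the Bernoulli action $Q\curvearrowright\{0,1\}^Q$, and soficity of $Q$ is equivalent to soficity of this Bernoulli action. I would take a sofic approximation $\sigma_n\colon G\to\mathrm{Sym}(V_n)$ of the canonical action together with observables encoding finitely many cylinder coordinates. Triviality of $N$ on the space forces each $\sigma_n(\nu)$, $\nu\in N$, to approximately preserve these finitely many atoms; the Kazhdan upgrade promotes this to genuine invariance and, after enlarging the set of coordinates, to $\sigma_n(s_i)$ being Hamming-close to $\mathrm{id}$. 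Local triviality then produces a large $\sigma_n(G)$-invariant set on which $N$ acts trivially; restricting the approximation there gives a sofic approximation of $Q\curvearrowright\{0,1\}^Q$, whence $Q$ is sofic. The delicate point is calibrating the finitely many coordinates in the observable against the Kazhdan constant so that approximate preservation of atoms really forces near-triviality of the generators.

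Part $(ii)$ is where I expect the real work to lie, since it must upgrade soficity of $Q$ all the way to residual finiteness. Fix $q_0\neq e$ with a lift $g_0\in G\setminus N$. From a sofic approximation $\sigma_n$ of $Q$ I would build a sofic approximation of $G$ that still detects $q_0$, for instance the product $(\sigma_n\circ\pi)\times\rho_n$ with $\rho_n$ a genuinely free finite-quotient approximation supplied by residual finiteness of $G$, so that it is asymptotically free on $G$ yet has $N$ acting trivially in the first factor. Since $G$ is stable in finite actions, this approximation is weakly contained in finite $G$-actions, and residual finiteness of $G$ identifies those with actions on finite quotients $G/K$. The main obstacle is to force the relevant quotients to contain $N$, i.e.\ to factor through $Q$: this is exactly where property (T) of $N$ enters, the trivial $N$-behaviour in the first factor providing $N$-invariant vectors that weak containment transports into genuine $N$-invariance, compelling $K\supseteq N$ up to the controlled error. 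The separation $\sigma_n(q_0)\not\approx\mathrm{id}$ then survives into some finite quotient of $Q$, giving residual finiteness of $Q$. Reconciling the weak-containment limit with the exact condition $K\supseteq N$, uniformly enough to preserve detection of $q_0$, is the step I expect to be hardest.
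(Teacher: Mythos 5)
Your treatments of parts (i) and (iii) are essentially the paper's arguments (the paper passes to the orbit space $Y_n/N$ rather than to the fixed-point set of $N$, but these differ only cosmetically), and both are correct modulo routine details. The serious problem is part (iv). You claim that approximate preservation of the cylinder-set atoms by $\sigma_n(s_i)$ can be upgraded, ``after enlarging the set of coordinates,'' to $\sigma_n(s_i)$ being Hamming-close to the identity. This is false, and in fact impossible: in Bowen's definition a sofic action comes with a sofic approximation of the \emph{group} $G$, so every nontrivial $s_i\in N$ moves a set of normalized density tending to $1$ in $V_n$. Preserving a partition into $2^{|W|}$ atoms setwise (which is all the Kazhdan upgrade of almost-invariant partitions gives you) says nothing about fixing points, since a permutation can preserve every atom while moving every point; and even preserving all fibres of the labelling $\varphi_n$ would only confine $\sigma_n(s_i)$ to permuting within fibres, which are large because $N$ acts trivially on $\{0,1\}^Q$. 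Consequently your ``local triviality'' lemma never applies, and the argument collapses. The paper's proof of (iv) needs a genuinely different input, namely Kun's theorem that every sofic approximation of a Kazhdan group is, up to $o(|V_n|)$ vertices, a disjoint union of expander components: one then defines an almost-action of $Q$ on the \emph{set of $N$-components} (each $\sigma_n(g)$ sends most of a component into a single component, by expansion plus the conjugation relation $gsg^{-1}\in N$), uses the Bernoulli statistics to show every nontrivial $q$ moves at least a quarter of the component mass, amplifies, and uniformizes the resulting measured permutation model.

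Part (ii) is pointed in the right direction (amplify $\sigma_n\circ\pi$ by finite quotients of $G$ to get a sofic approximation of $G$, invoke stability in finite actions, then use property (T) to push the resulting finite $G$-actions down to $Q$), but the step you flag as hardest is exactly the one left unproved, and your formulation in terms of forcing finite-index subgroups $K$ to contain $N$ is not how the difficulty resolves. The paper instead works with partition statistics: weak containment in finite actions produces a finite $G$-action with a partition whose atoms are almost $N$-invariant; a quantitative Kazhdan lemma (almost-invariant partitions of a finite $N$-set are $O(\sqrt{\delta})$-close to genuinely $N$-invariant ones) replaces the atoms by $N$-invariant sets, and the $G$-action on the Boolean algebra they generate factors through $Q$. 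This shows $Q$ is itself stable in finite actions, and the conclusion that a sofic, stable-in-finite-actions group is residually finite is then quoted from Gohla--Thom rather than re-derived by chasing a single element $q_0$. Without that partition-level version of the Kazhdan upgrade, your outline for (ii) does not close.
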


The proofs appear in Sections~\ref{sec:flex}--\ref{sec:sofic-shift}. The possibility of
the first implication was anticipated in Becker--Lubotzky, see \cite{BL20}. The proof of the 
fourth implication also depends on results of Kun, see \cite{KunSoficT}.
As a first application we record four corollaries based on the
residually finite Rips construction of Wise \cite{WiseRipsRF} and the
refined version due to Belegradek--Osin \cite{BelegradekOsin08}.

\setcounter{corollaryB}{1}

\begin{corollaryB}\label{cor:B}
There exists a hyperbolic, residually finite group that is not flexibly
P-stable.
\end{corollaryB}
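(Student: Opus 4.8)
The plan is to argue by contraposition from Theorem~\ref{thm:A}(i). Since a finitely generated normal subgroup transports flexible P-stability to the quotient, it suffices to exhibit a short exact sequence $1\to N\to G\xrightarrow{\pi} Q\to 1$ in which $N$ is finitely generated, $G$ is hyperbolic and residually finite, and $Q$ is \emph{not} flexibly P-stable. Indeed, were such a $G$ flexibly P-stable, Theorem~\ref{thm:A}(i) would force $Q=G/N$ to be flexibly P-stable as well, contradicting the choice of $Q$. Hence $G$ itself is the desired hyperbolic, residually finite group failing flexible P-stability.

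The first ingredient is a finitely presented group $Q$ that is provably not flexibly P-stable. Such groups are available within the Becker--Lubotzky framework \cite{BL20}, where flexible P-stability of a finitely presented group is characterized through its invariant random subgroups: flexible P-stability amounts to every sofic invariant random subgroup being a weak limit of invariant random subgroups coming from finite-index data, and this can fail. I would fix any finitely presented $Q$ realizing such a failure. This step carries essentially all the genuine content of the corollary; the remainder is formal.

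The second ingredient is Wise's residually finite Rips construction \cite{WiseRipsRF}. Applied to the finitely presented group $Q$, it produces a short exact sequence $1\to N\to G\xrightarrow{\pi} Q\to 1$ with $G$ word-hyperbolic and residually finite and $N$ finitely generated (two generators already suffice in the classical Rips construction, and Wise's refinement preserves both the finite generation of $N$ and the residual finiteness of $G$). The refined construction of Belegradek--Osin \cite{BelegradekOsin08} would work equally well and gives additional control over $N$ should it be needed. Combining the two ingredients, $G$ is hyperbolic and residually finite by construction, while the contrapositive of Theorem~\ref{thm:A}(i), applied to the finitely generated kernel $N$, shows that $G$ is not flexibly P-stable.

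The main obstacle is concentrated entirely in locating the finitely presented non-flexibly-P-stable group $Q$; once it is in hand, the Rips construction and Theorem~\ref{thm:A}(i) combine mechanically. Two routine points deserve a check: that the chosen $Q$ is genuinely finitely presented, since this is a standing hypothesis of the Rips construction, and that Theorem~\ref{thm:A}(i) consumes only the finite generation of $N$ rather than any stronger property, so that Wise's construction supplies exactly what is required.
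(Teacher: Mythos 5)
Your overall architecture is exactly the paper's: apply Wise's residually finite Rips construction to a suitable finitely presented quotient $Q$ and run Theorem~\ref{thm:A}(i) in contrapositive form. The gap is in your first ingredient. You assert the existence of a finitely presented group that is provably not flexibly P-stable by appealing to an invariant-random-subgroup characterization from \cite{BL20}, but you never produce such a group, and the characterization you describe (co-soficity of IRSs) is the Becker--Lubotzky--Thom criterion for P-stability of \emph{amenable} groups; it is not the mechanism by which one actually exhibits a failure of flexible P-stability. As you yourself note, this step ``carries essentially all the genuine content of the corollary,'' so leaving it at the level of ``fix any finitely presented $Q$ realizing such a failure'' is a genuine hole.

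The paper closes this hole differently and concretely: take $Q$ finitely presented, \emph{sofic}, and \emph{not residually finite} (e.g.\ $BS(2,3)$ or the Baumslag--Gersten group, both sofic as HNN-extensions over amenable subgroups). Then one does not need to know in advance that $Q$ fails flexible P-stability; instead, one argues that if $G$ were flexibly P-stable, Theorem~\ref{thm:flex-quot} would make $Q$ flexibly P-stable, and Corollary~\ref{cor:flex-sofic-RF} (the Becker--Lubotzky theorem that a sofic, flexibly P-stable group is residually finite) would then force $Q$ to be residually finite, a contradiction. To repair your write-up, replace your first ingredient with this choice of $Q$ and add the invocation of Corollary~\ref{cor:flex-sofic-RF}; the rest of your argument (finite generation of $N$ being the only property consumed, residual finiteness and hyperbolicity of $G$ coming from Wise's construction) is correct as stated.
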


To the best of our knowledge, no example of such a group was previously known. Without
residual finiteness, this result and a similar strategy was outlined 
by Becker--Lubotzky, see \cite{BL20}.

\begin{corollaryB}\label{cor:C}
There exists a hyperbolic group that cannot be stable in finite actions,
provided it is residually finite.
\end{corollaryB}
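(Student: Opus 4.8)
The plan is to run the contrapositive of Theorem~\ref{thm:A}(ii) against the property-(T) version of the Rips construction. First I would fix a target quotient $Q$ that is finitely presented, sofic, and \emph{not} residually finite. A convenient choice is a Baumslag--Solitar group such as $BS(2,3)=\langle a,t\mid ta^2t^{-1}=a^3\rangle$: it is one-relator, hence finitely presented; it is non-Hopfian and therefore not residually finite; and it is sofic, being an HNN extension of $\langle a\rangle\cong\mathbb{Z}$ along the amenable associated subgroups $\langle a^2\rangle$ and $\langle a^3\rangle$, so that soficity is preserved. Any finitely presented sofic non-residually-finite group would serve equally well.

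Next I would feed $Q$ into the refined Rips construction of Belegradek--Osin \cite{BelegradekOsin08} to produce a short exact sequence $1\to N\to G\xrightarrow{\pi}Q\to 1$ in which $G$ is hyperbolic and the kernel $N$ is finitely generated with Kazhdan's property (T). This is the crucial input: unlike the original Rips construction, whose kernel is free and hence never Kazhdan, the Belegradek--Osin variant can be arranged so that $N$ has property (T) while $G$ stays hyperbolic, which is exactly the combination that Theorem~\ref{thm:A}(ii) requires.

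With $G$ in hand I would argue by contradiction under the stated hypothesis. Assume $G$ is residually finite, and suppose toward a contradiction that $G$ is also stable in finite actions. Then all four hypotheses of Theorem~\ref{thm:A}(ii) are met: $N$ is Kazhdan, $G$ is residually finite, $G$ is stable in finite actions, and $Q$ is sofic. The theorem then forces $Q$ to be residually finite, contradicting the choice of $Q$. Hence, provided $G$ is residually finite, it cannot be stable in finite actions, and $G$ is the desired hyperbolic group.

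The main obstacle is precisely the reason the conclusion must stay conditional on residual finiteness of $G$. Wise's construction \cite{WiseRipsRF} delivers residually finite $G$, but only with free kernel, which is never Kazhdan; the Belegradek--Osin construction \cite{BelegradekOsin08} delivers a Kazhdan kernel, but is not known to produce residually finite $G$. Since no single available construction simultaneously guarantees both $N$ Kazhdan and $G$ residually finite, residual finiteness of $G$ cannot be asserted outright and must be carried as a hypothesis. The remaining work is routine: verifying that the chosen $Q$ is an admissible input to the Belegradek--Osin machine and confirming that soficity of $Q$ genuinely holds.
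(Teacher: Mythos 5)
Your proposal is correct and follows essentially the same route as the paper: take a finitely presented, sofic, non--residually finite $Q$ (such as $BS(2,3)$), feed it into the Belegradek--Osin construction to get a hyperbolic $G$ with Kazhdan kernel, and run Theorem~\ref{thm:A}(ii) in the contrapositive. Your closing remarks on why the conclusion must remain conditional on residual finiteness of $G$ match the paper's discussion as well.
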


It is a major open problem whether every hyperbolic group is residually
finite -- see, for instance, \cite{WiseRipsRF} for background.
On the other hand, it is also open whether there exists a residually finite
group that is not stable in finite actions; see the work of Gohla and the second author \cite{GohlaThom24}. The previous corollary shows that one of the problems has a negative answer.

\begin{corollaryB}\label{cor:D}
There exists a hyperbolic group $G$, which is not RFD.
\end{corollaryB}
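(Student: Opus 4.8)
The plan is to deduce the statement from part (iii) of Theorem~\ref{thm:A} by contraposition, feeding into it a short exact sequence produced by a Rips-type construction. Concretely, suppose we can build a short exact sequence $1\longrightarrow N\longrightarrow G\longrightarrow Q\longrightarrow 1$ in which $G$ is hyperbolic, $N$ is finitely generated and Kazhdan, and $Q$ is a fixed finitely presented group that fails to be RFD. If $G$ were RFD, then Theorem~\ref{thm:A}(iii) would force $Q$ to be RFD as well, a contradiction. Hence such a $G$ is the desired hyperbolic group that is not RFD.

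For the quotient $Q$ I would take a higher-rank lattice, for instance $Q=\mathrm{SL}_3(\mathbb{Z})$. This group is finitely presented, and by a theorem of Bekka its full group $C^*$-algebra is not residually finite-dimensional: every finite-dimensional unitary representation factors through a finite quotient, and property (T) then prevents the finite-dimensional representations from separating the points of $C^*(\mathrm{SL}_3(\mathbb{Z}))$. Thus $Q$ provides a concrete finitely presented group that is not RFD, and in fact any finitely presented non-RFD group would serve equally well as input.

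To realize $Q$ as the quotient of a hyperbolic group with Kazhdan kernel, I would invoke the property (T) refinement of the Rips construction due to Belegradek--Osin \cite{BelegradekOsin08}. Starting from the finitely presented group $Q$ together with a non-elementary hyperbolic group $H$ having property (T), their construction yields a hyperbolic group $G$ and a short exact sequence $1\longrightarrow N\longrightarrow G\longrightarrow Q\longrightarrow 1$ whose kernel $N$ is finitely generated and arises as a quotient of $H$; since property (T) passes to quotients, $N$ is Kazhdan. This supplies exactly the data required above, and the contrapositive of Theorem~\ref{thm:A}(iii) completes the argument.

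The main obstacle is not the group-theoretic construction, which is essentially off the shelf, but the verification that the chosen quotient is genuinely not RFD; this rests on Bekka's rigidity theorem, which is the deep external input driving the corollary. A secondary point to confirm is that the Belegradek--Osin construction can be arranged to deliver $N$ finitely generated \emph{and} Kazhdan simultaneously with $G$ being hyperbolic. This is precisely the content of their refinement, so no new work is needed beyond citing it correctly and selecting $H$ to have property (T).
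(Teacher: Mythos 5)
Your proof is correct and follows essentially the same route as the paper: apply the Belegradek--Osin construction with a Kazhdan hyperbolic group $H$ to a finitely presented non-RFD quotient $Q$, then conclude via the contrapositive of Theorem~\ref{thm:A}(iii). The only divergence is the choice of witness: the paper takes $Q$ to be a finitely presented sofic group that is not residually finite (e.g.\ $BS(2,3)$) and uses the elementary fact that a finitely generated RFD group is residually finite, whereas you take $Q=\mathrm{SL}_3(\mathbb{Z})$ and invoke Bekka's theorem on the congruence subgroup property. Both inputs are legitimate; the paper's choice keeps the external input elementary, while yours leans on Bekka's deeper rigidity result but is equally valid.
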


For hyperbolic groups acting properly discontinuously and cocompactly on trees or on the hyperbolic plane, the universal group $C^*$-algebra is known to be RFD; see, for instance, \cite[Section~2.39]{BrownOzawa}. For fundamental groups of closed hyperbolic $3$-manifolds, residual finite dimensionality of the full group $C^*$-algebra also holds; see Remark~\ref{rem:3dRFD} below. It remains open whether lattices in $\mathrm{SO}(n,1)$ or $\mathrm{SU}(n,1)$ are RFD for $n \ge 4$, see \cite[Conjecture 6.4]{LuSh04}. Note that as a consequence of a negative solution of the Connes Embedding Problem \cite{mipre}, the group $F_2 \times F_2$ and any group containing it is not RFD.

\begin{corollaryB}\label{cor:E}
If a non-sofic group exists, then there exists a hyperbolic group admitting a
non-sofic p.m.p.\ action.
\end{corollaryB}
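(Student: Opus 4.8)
I read Corollary~\ref{cor:E} as the contrapositive of Theorem~\ref{thm:A}$(iv)$. It therefore suffices to exhibit a hyperbolic group $G$ fitting into a short exact sequence $1\to N\to G\to Q\to 1$ with $N$ Kazhdan and with $Q=G/N$ non-sofic: if the canonical p.m.p.\ action $G\curvearrowright\{0,1\}^Q$ were sofic, then Theorem~\ref{thm:A}$(iv)$ would make $Q$ sofic, a contradiction, so this action is a non-sofic p.m.p.\ action of the hyperbolic group $G$. Note that such a $Q$ is automatically finitely presented: a Kazhdan group $N$ is finitely generated, and the quotient of the finitely presented group $G$ by a finitely generated normal subgroup is again finitely presented. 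Thus the whole statement reduces to producing a \emph{finitely presented} non-sofic group and then realizing it through a Belegradek--Osin extension.

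The first step is to upgrade the hypothesis from ``some non-sofic group'' to ``some finitely presented non-sofic group''. Since soficity is a local property---a group is sofic precisely when all of its finitely generated subgroups are sofic---the existence of a non-sofic group yields a finitely generated non-sofic group, i.e.\ a point of the space $\mathcal{G}_k$ of $k$-marked groups lying outside the sofic locus for some $k$. The class of sofic groups is closed in $\mathcal{G}_k$ (local embeddability into sofic groups implies soficity), so the non-sofic locus is open and nonempty. As the finitely presented marked groups are dense in $\mathcal{G}_k$---any basic open set already contains the group presented by the finitely many relations that it prescribes---this open set contains a finitely presented group $Q$, which is then non-sofic.

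Next I would feed this finitely presented non-sofic $Q$ into the Belegradek--Osin refinement of the Rips construction \cite{BelegradekOsin08}: it produces a hyperbolic group $G$ and a short exact sequence $1\to N\to G\to Q\to 1$ in which the normal subgroup $N$ has Kazhdan's property (T). With $N$ Kazhdan, $G$ hyperbolic and $Q$ non-sofic in hand, the contrapositive of Theorem~\ref{thm:A}$(iv)$ recorded in the first paragraph finishes the proof.

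The genuinely nontrivial content is concentrated in two places. The analytic weight is carried entirely by Theorem~\ref{thm:A}$(iv)$, which I am assuming. The step I expect to be the real obstacle is the reduction to a finitely presented non-sofic group: because $N$ must be Kazhdan, hence finitely generated, the hyperbolic overgroup $G$ forces $Q$ to be finitely presented, so one genuinely cannot run the argument on an arbitrary finitely generated non-sofic group. The reduction above sidesteps this by passing through the space of marked groups, where the key input is the closedness of the sofic locus; verifying that this closedness applies and that the Belegradek--Osin construction indeed delivers a property (T) kernel over a prescribed finitely presented quotient is where the care is needed.
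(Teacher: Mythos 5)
Your proof is correct and follows the same route as the paper: feed a non-sofic finitely presented $Q$ into the Belegradek--Osin construction (Theorem~\ref{thm:BO}) with a Kazhdan hyperbolic seed group, and conclude via the contrapositive of Theorem~\ref{thm:A}(iv). The one genuine addition is your opening reduction: the hypothesis of Corollary~\ref{cor:E} only provides \emph{some} non-sofic group, whereas Theorem~\ref{thm:BO} requires a finitely presented quotient (and, as you observe, finite presentability of $Q$ is forced in any case, since $G$ is hyperbolic and $N$, being Kazhdan, is finitely generated). The paper's printed proof passes over this point silently; your argument through the space of marked groups --- soficity passes to finitely generated subgroups, the sofic locus is closed under limits of marked groups, and finitely presented groups are dense --- is the standard way to supply the missing step and is correct as stated. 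In this respect your write-up is more complete than the paper's own proof.
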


Again, this is somewhat surprising, since the general belief is that there do exist non-sofic groups, while hyperbolic 2- and 3-manifold groups do have the property that all actions are sofic. Indeed, those groups are known to have property MD of Kechris \cite{Kechris12} and thus all actions are weakly contained in finite actions; in particular, they are all sofic. It is an outstanding open problem to decide if a non-sofic group or at least a non-sofic p.m.p.\ action exists.

\medskip

The paper is organized as follows.
Section~\ref{sec:flex} reviews flexible P-stability and records the quotient
statement.
Section~\ref{sec:sfa} does the same for stability in finite actions in the
presence of a Kazhdan normal subgroup.
Section~\ref{sec:RFD} discusses residual finite-dimensionality. In Section~\ref{sec:meassof} we discuss measured sofic approximations, a useful tool for the next section.
Section~\ref{sec:sofic-shift} observes that soficity of the quasi-regular Bernoulli
shift forces soficity of the quotient.
Section~\ref{sec:Rips} recalls the relevant variations of the Rips construction and proves
Theorem~\ref{thm:A} and its corollaries.

\section{Flexible P-stability}\label{sec:flex}

Let's recall the basic setup of soficity and permutation stability. The notion of soficity was introduced by Gromov \cite{Gromov99}, see also \cite{PestovBriefGuide, Weiss00}.
Let $X$ be a finite set. For $\sigma\in{\rm Sym}(X)$ write
\[
\ell_X(\sigma)=\frac{1}{|X|}\,|\{x\in X\mid \sigma(x)\neq x\}|
\qquad\text{and}\qquad
d_X(\sigma,\tau)=\ell_X(\sigma^{-1}\tau).
\]

\begin{definition}
\label{def:hamming-sep}
Let $S$ be a finite set and $F=F(S)$ the free group.
Given $\pi\colon F\twoheadrightarrow Q$ with kernel $N$,
a sequence of homomorphisms
\[
\varphi_n\colon F\to {\rm Sym}(k_n)
\]
\emph{separates $N$ in the Hamming metric} if
\[
\lim_{n\to\infty}\ell_{k_n}(\varphi_n(w))=
\begin{cases}
0 &\text{if } w\in N,\\
1 &\text{if } w\notin N.
\end{cases}
\]
\end{definition}

It is a well-known characterization to say that a group $Q=F/N$ is \emph{sofic} if and only if there exists a sequence of homomorphisms as above that separate $N$ in the Hamming metric. 

The Hamming metric enjoys various abstract properties that are not consequence of the triangle inequality. Apart from being conditionally negative definite on ${\rm Sym}(X)$, the following elementary lemma contains maybe the most striking abstract property of the Hamming metric.

\begin{lemma}
\label{lem:small-subgroup}
Let $\sigma_1,\dots,\sigma_m\in{\rm Sym}(X)$ satisfy
$\ell_X(\sigma_i) \leq \varepsilon$ for all $i$.
Then every $\tau\in\langle\sigma_1,\dots,\sigma_m\rangle$ satisfies $\ell_X(\tau)\le m\varepsilon.$
\end{lemma}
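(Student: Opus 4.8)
The plan is to exploit the fact that, unlike the naive subadditivity estimate $\ell_X(\sigma\tau)\le\ell_X(\sigma)+\ell_X(\tau)$ — which would only yield a bound growing with the length of a word in the generators — one can pin down a single large set of points that is fixed by the \emph{entire} subgroup at once. Throughout, write $\mathrm{supp}(\sigma)=\{x\in X:\sigma(x)\neq x\}$ for the support of a permutation, so that $\ell_X(\sigma)=|\mathrm{supp}(\sigma)|/|X|$.

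First I would introduce the common fixed-point set
\[
F=\{x\in X:\sigma_i(x)=x\text{ for all }i=1,\dots,m\}=X\setminus\bigcup_{i=1}^m\mathrm{supp}(\sigma_i).
\]
A union bound together with the hypothesis $\ell_X(\sigma_i)\le\varepsilon$ gives
\[
|X\setminus F|=\Big|\bigcup_{i=1}^m\mathrm{supp}(\sigma_i)\Big|\le\sum_{i=1}^m|\mathrm{supp}(\sigma_i)|\le m\varepsilon\,|X|,
\]
so that $F$ is large: its complement has normalized size at most $m\varepsilon$.

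The key step is then to observe that every element of the subgroup fixes $F$ pointwise. Indeed, each generator $\sigma_i$ fixes $F$ pointwise by definition, hence so does each inverse $\sigma_i^{-1}$; since an arbitrary $\tau\in\langle\sigma_1,\dots,\sigma_m\rangle$ is a finite product of the $\sigma_i^{\pm1}$, it fixes every point of $F$ as well. Consequently $\mathrm{supp}(\tau)\subseteq X\setminus F$, and therefore
\[
\ell_X(\tau)=\frac{|\mathrm{supp}(\tau)|}{|X|}\le\frac{|X\setminus F|}{|X|}\le m\varepsilon,
\]
which is the desired conclusion.

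There is no genuine obstacle here: the entire content is the elementary but crucial remark that it is the common fixed set, rather than any individual support, that is the subgroup-invariant object controlling the Hamming length. The only point worth flagging is that the resulting bound is completely independent of the word length of $\tau$ — this is precisely the ``striking'' non-metric feature alluded to just before the statement, and it is what makes the union bound over the $m$ generators, rather than over the letters of a word representing $\tau$, the correct estimate.
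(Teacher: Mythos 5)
Your proof is correct and coincides with the paper's own argument: both take the union $\Omega=\bigcup_i\mathrm{supp}(\sigma_i)$ (your $X\setminus F$), bound its size by $m\varepsilon|X|$ via the union bound, and observe that every word in the $\sigma_i^{\pm1}$ fixes the complement pointwise. No differences worth noting.
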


\begin{proof}
Let ${\rm supp}(\sigma_i)=\{x\in X\mid \sigma_i(x)\neq x\}$ and set
$\Omega=\bigcup_{i=1}^m{\rm supp}(\sigma_i)$. Then
$|\Omega| \leq m\varepsilon|X|$.
Every $\sigma_i$ fixes $\Omega^c$ pointwise and hence so does every word in the
$\sigma_i^{\pm1}$.
Thus each $\tau$ moves points only in $\Omega$, and the claim follows.
\end{proof}

\begin{lemma}
\label{lem:kernel-support}
Assume that $N$ is generated by $m$ elements $n_1,\dots,n_m$.
Let $\varphi\colon G\to{\rm Sym}(X)$ be a homomorphism such that
$\ell_X(\varphi(n_i))\leq\varepsilon$ for $i=1,\dots,m$.
Then there exists $\Omega\subseteq X$ with $|\Omega|\leq m\varepsilon|X|$
such that $\varphi(N)$ fixes $\Omega^c$ pointwise.
\end{lemma}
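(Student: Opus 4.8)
The plan is to observe that Lemma~\ref{lem:kernel-support} is a direct strengthening of Lemma~\ref{lem:small-subgroup}: rather than merely bounding the normalized support of each element of the subgroup, I would extract the explicit common fixed set that the proof of that lemma already constructs. The essential point is that a homomorphism sends generators to generators of the image, so that $\varphi(N)$ is generated by the finitely many permutations $\varphi(n_1),\dots,\varphi(n_m)$, each of which has small support.

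Concretely, I would set $\sigma_i=\varphi(n_i)\in{\rm Sym}(X)$ and $\Omega=\bigcup_{i=1}^{m}{\rm supp}(\sigma_i)$, exactly as in the proof of Lemma~\ref{lem:small-subgroup}. First I would record the cardinality bound $|\Omega|\le\sum_{i=1}^m|{\rm supp}(\sigma_i)|=\sum_{i=1}^m \ell_X(\sigma_i)\,|X|\le m\varepsilon|X|$, using the hypothesis $\ell_X(\varphi(n_i))\le\varepsilon$. Next I would note that each $\sigma_i$ fixes $\Omega^c$ pointwise by construction, hence so does every word in the $\sigma_i^{\pm1}$; since $N=\langle n_1,\dots,n_m\rangle$ and $\varphi$ is a homomorphism, every element of $\varphi(N)$ is such a word and therefore fixes $\Omega^c$ pointwise. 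This produces the claimed subset $\Omega$.

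There is no genuine obstacle here: the statement is really a reformulation that isolates the invariant complement $\Omega^c$ implicit in Lemma~\ref{lem:small-subgroup}, and the only inputs are that images of generators generate the image and that supports are subadditive under composition. The mild care required is simply to pass from the assertion that every $\tau\in\varphi(N)$ moves points only within $\Omega$ to the equivalent statement that $\varphi(N)$ fixes $\Omega^c$ pointwise, and to keep track of the union-of-supports size estimate.
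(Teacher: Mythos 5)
Your proof is correct and is essentially the paper's own argument: the paper proves this lemma by invoking Lemma~\ref{lem:small-subgroup} applied to $\varphi(n_1),\dots,\varphi(n_m)$, whose proof constructs exactly the set $\Omega=\bigcup_i{\rm supp}(\varphi(n_i))$ you describe. Your only addition is to make explicit that the \emph{proof} (not merely the statement) of Lemma~\ref{lem:small-subgroup} yields the common fixed complement $\Omega^c$, which is a fair and accurate observation but not a different route.
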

\begin{proof}
Apply Lemma~\ref{lem:small-subgroup} to
$\varphi(n_1),\dots,\varphi(n_m)$.
\end{proof}

Let us recall also the notion of asymptotic homomorphism and flexible stability in this setting.

\begin{definition}
\label{def:asymp-hom}
Let $G$ be a countable group.
A sequence of maps $\sigma_n\colon G\to{\rm Sym}(X_n)$
is an \emph{asymptotic homomorphism} if
\[
\lim_{n\to\infty} d_{X_n}(\sigma_n(gh),\sigma_n(g)\sigma_n(h))=0
\qquad \forall g,h\in G.
\]
\end{definition}

\begin{definition}
\label{def:flexP}
A finitely generated group $G$ is \emph{flexibly P-stable} if
for every asymptotic homomorphism
$\sigma_n\colon G\to{\rm Sym}(X_n)$
there exist finite sets $Y_n$ with $X_n\subseteq Y_n$,
\[
\frac{|Y_n\setminus X_n|}{|X_n|}\to 0,
\]
and homomorphisms
$\tau_n\colon G\to{\rm Sym}(Y_n)$ such that
\[
\lim_{n\to\infty} d_{X_n}(\sigma_n(g),\tau_n(g)|_{X_n})=0
\qquad \forall g\in G.
\]
\end{definition}

The notion of flexible P-stability and usual P-stability was introduced and studied in a series of papers, including \cite{AP14, BL20, BLT18}.

\begin{theorem}
\label{thm:flex-quot}
Let
\[
1\longrightarrow N\longrightarrow G\xrightarrow{\pi} Q\longrightarrow 1
\]
be a short exact sequence with $N$ finitely generated.
If $G$ is flexibly P-stable, then $Q$ is flexibly P-stable.
\end{theorem}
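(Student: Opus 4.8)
The plan is to transport a given asymptotic homomorphism of $Q$ to one of $G$, correct it using the flexible P-stability of $G$, and then descend the resulting genuine $G$-action back to $Q$ on the locus where the normal subgroup acts trivially. Concretely, fix generators $n_1,\dots,n_m$ of $N$ and let $\rho_n\colon Q\to{\rm Sym}(Z_n)$ be an asymptotic homomorphism. Since $\pi$ is a genuine homomorphism, the composite $\sigma_n:=\rho_n\circ\pi\colon G\to{\rm Sym}(Z_n)$ satisfies $d_{Z_n}(\sigma_n(gh),\sigma_n(g)\sigma_n(h))=d_{Z_n}(\rho_n(\pi(g)\pi(h)),\rho_n(\pi(g))\rho_n(\pi(h)))\to 0$, so it is an asymptotic homomorphism of $G$ (note that $Q$, being a quotient of the finitely generated group $G$, is itself finitely generated). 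Applying flexible P-stability of $G$ to $\sigma_n$ produces finite sets $Y_n\supseteq Z_n$ with $|Y_n\setminus Z_n|/|Z_n|\to 0$ and genuine homomorphisms $\tau_n\colon G\to{\rm Sym}(Y_n)$ with $d_{Z_n}(\sigma_n(g),\tau_n(g)|_{Z_n})\to 0$ for every $g\in G$.

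First I would observe that $N$ is almost invisible for $\tau_n$. Indeed $\sigma_n(n_i)=\rho_n(e_Q)$, and since $\rho_n$ is an asymptotic homomorphism $\ell_{Z_n}(\rho_n(e_Q))\to 0$; combining this with $d_{Z_n}(\sigma_n(n_i),\tau_n(n_i)|_{Z_n})\to 0$ and the bi-invariance of the Hamming metric gives that the fraction of $x\in Z_n$ moved by $\tau_n(n_i)$ tends to $0$. Since $|Y_n\setminus Z_n|=o(|Z_n|)$, this upgrades to $\varepsilon_n:=\max_i\ell_{Y_n}(\tau_n(n_i))\to 0$. Now Lemma~\ref{lem:kernel-support}, applied to the genuine homomorphism $\tau_n$ and the generators $n_1,\dots,n_m$, yields a set of density at most $m\varepsilon_n\to 0$ off which $\tau_n(N)$ acts trivially. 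I would then pass to the full fixed-point set $\mathrm{Fix}_n:=\{y\in Y_n\mid \tau_n(\nu)(y)=y\ \forall\nu\in N\}$, which contains that set and hence satisfies $|Y_n\setminus\mathrm{Fix}_n|=o(|Z_n|)$. The normality of $N$ makes $\mathrm{Fix}_n$ invariant under $\tau_n(G)$: if $\tau_n(\nu)y=y$ for all $\nu\in N$, then $\tau_n(\nu)\tau_n(g)y=\tau_n(g)\tau_n(g^{-1}\nu g)y=\tau_n(g)y$. Thus $\tau_n$ restricts to a $G$-action on $\mathrm{Fix}_n$ on which $N$ acts trivially, and so descends to a genuine homomorphism $\bar\tau_n\colon Q\to{\rm Sym}(\mathrm{Fix}_n)$, given by $\bar\tau_n(q)=\tau_n(g)|_{\mathrm{Fix}_n}$ for any lift $g$ of $q$.

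It remains to reconcile $\mathrm{Fix}_n$, obtained by \emph{deleting} points from $Y_n$, with the requirement that flexible P-stability of $Q$ be witnessed on a set \emph{containing} $Z_n$. Both $Z_n$ and $\mathrm{Fix}_n$ are co-small in $Y_n$, so $|Z_n\setminus\mathrm{Fix}_n|=o(|Z_n|)$. I would set $W_n:=\mathrm{Fix}_n\cup Z_n$ and define $\tilde\tau_n\colon Q\to{\rm Sym}(W_n)$ to act by $\bar\tau_n$ on $\mathrm{Fix}_n$ and trivially on the few points of $Z_n\setminus\mathrm{Fix}_n$; this is a genuine homomorphism, $W_n\supseteq Z_n$, and $|W_n\setminus Z_n|\le|Y_n\setminus Z_n|=o(|Z_n|)$. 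For the correction estimate, fix $q\in Q$ with a lift $g$. On $Z_n\cap\mathrm{Fix}_n$ one has $\tilde\tau_n(q)=\tau_n(g)$, while $\rho_n(q)=\sigma_n(g)$ on all of $Z_n$; hence the points of $Z_n$ where $\rho_n(q)$ and $\tilde\tau_n(q)$ disagree lie either in $Z_n\setminus\mathrm{Fix}_n$ (a set of density $o(1)$) or among the $x\in Z_n$ with $\sigma_n(g)(x)\neq\tau_n(g)(x)$ (again density $o(1)$ by the choice of $\tau_n$). Therefore $d_{Z_n}(\rho_n(q),\tilde\tau_n(q)|_{Z_n})\to 0$, which is exactly flexible P-stability of $Q$.

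I expect the main obstacle to be the bookkeeping in the last step rather than any single hard idea: the genuine $Q$-action produced naturally lives on the shrunken set $\mathrm{Fix}_n$, whereas the definition only permits enlarging $Z_n$, so one must glue in the missing points with a trivial action and check that this does not spoil the correction. The conceptual heart is the combination of finite generation of $N$ (through Lemma~\ref{lem:kernel-support}, which confines the whole image $\tau_n(N)$ to a small support once its finitely many generators have small support) with the normality of $N$ (which guarantees that the large fixed-point locus is $G$-invariant and hence carries a genuine $Q$-action); flexible -- as opposed to strict -- stability is precisely what licenses the unavoidable enlargement from $Z_n$ to $W_n$.
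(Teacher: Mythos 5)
Your proposal is correct and follows essentially the same route as the paper: compose the given asymptotic homomorphism with $\pi$, correct it using flexible P-stability of $G$, confine the image of $N$ to a small support via Lemma~\ref{lem:kernel-support}, and use normality of $N$ to descend the corrected action to $Q$. The only difference is cosmetic and in your favour: where the paper passes to the orbit space $Y_n/N$ and identifies singleton orbits with points, you restrict to the $N$-fixed-point set and pad with a trivial action on the few missing points of the original set, which handles the requirement $W_n\supseteq Z_n$ more explicitly.
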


\begin{proof}
Let $\sigma_n\colon Q\to{\rm Sym}(X_n)$ be an asymptotic homomorphism and set
\[
\bar{\sigma}_n := \sigma_n\circ\pi\colon G\to{\rm Sym}(X_n).
\]
Then $(\bar{\sigma}_n)_n$ is an asymptotic homomorphism of $G$.
By flexible P-stability of $G$ we find finite sets $Y_n\supseteq X_n$ with
$|Y_n\setminus X_n|/|X_n|\to 0$ and homomorphisms
\[
\tau_n\colon G\to{\rm Sym}(Y_n)
\]
such that
\[
d_{X_n}\bigl(\bar{\sigma}_n(g),\tau_n(g)|_{X_n}\bigr)\to 0
\qquad \forall g\in G.
\]

Let $n_1,\dots,n_m$ generate $N$.
Since $\pi(n_i)=e$, the restrictions $\tau_n(n_i)|_{X_n}$ are asymptotically
trivial.
Applying Lemma~\ref{lem:kernel-support} to the restricted action on $X_n$,
we obtain subsets $\Omega_n\subseteq X_n$ with $|\Omega_n|/|X_n|\to 0$
such that $N$ fixes $X_n\setminus\Omega_n$ pointwise.

Consider the induced action of $G$ on the finite orbit set $Y_n/N$.
Since $N$ is normal, this action factors through $Q$.
Moreover, a proportion tending to $1$ of points in $X_n$ correspond to singleton
$N$-orbits. Equivalently, there exists a sequence $\delta_n\to 0$ and subsets
$X_n'\subseteq X_n$ with $|X_n'|\ge (1-\delta_n)|X_n|$ such that every point of
$X_n'$ has trivial $N$-stabilizer. On $X_n'$ the induced $Q$-action agrees with
the original maps $\sigma_n$ up to $\delta_n$ in Hamming distance, i.e.
\[
d_{X_n}\bigl(\sigma_n(q),\,\bar\tau_n(q)\bigr)\le \delta_n\qquad \forall q\in Q,
\]
where $\bar\tau_n$ denotes the permutation action of $Q$ induced by
$\tau_n$ on $Y_n/N$ and restricted to $X_n$ via the identification of
singleton $N$-orbits with their unique elements.
Extending $X_n$ to $Y_n/N$ alters cardinality by a vanishing proportion, so the
induced $Q$-actions on $Y_n/N$ correct the asymptotic homomorphism $(\sigma_n)_n$ to a genuine action.
\end{proof}

\begin{corollary}
\label{cor:flex-sofic-RF}
In the situation of the theorem. If $G$ is flexibly P-stable and $Q$ is sofic, then $Q$ is residually finite.
\end{corollary}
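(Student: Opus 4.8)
The plan is to feed the two hypotheses into Theorem~\ref{thm:flex-quot} to obtain that $Q$ is \emph{simultaneously} sofic and flexibly P-stable, and then to prove the general principle that a sofic, flexibly P-stable group is residually finite. The mechanism is that a sofic approximation displaces every nontrivial element on almost all of its underlying set, that flexible P-stability corrects such an approximation to genuine homomorphisms into finite symmetric groups while altering only a vanishing proportion of points, and that a vanishing correction cannot undo an almost-total displacement. Consequently the corrected homomorphisms still move every nontrivial element, which is precisely residual finiteness.

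First I would note that $Q$, being a quotient of the finitely generated group $G$, is finitely generated, and that $Q$ is flexibly P-stable by Theorem~\ref{thm:flex-quot}. Next I would manufacture a suitable asymptotic homomorphism of $Q$ with uniform displacement. Fix a presentation $\pi\colon F=F(S)\twoheadrightarrow Q$ with kernel $K$ and, using the soficity characterization recorded after Definition~\ref{def:hamming-sep}, homomorphisms $\varphi_n\colon F\to{\rm Sym}(X_n)$ separating $K$ in the Hamming metric, so that $\ell_{X_n}(\varphi_n(w))\to 0$ for $w\in K$ and $\ell_{X_n}(\varphi_n(w))\to 1$ for $w\notin K$. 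Choosing a set-theoretic section $s\colon Q\to F$ of $\pi$ and setting $\sigma_n:=\varphi_n\circ s$, the identity $s(q)s(q')=s(qq')\,c(q,q')$ with $c(q,q')\in K$ yields, since each $\varphi_n$ is an honest homomorphism of the free group, $d_{X_n}(\sigma_n(qq'),\sigma_n(q)\sigma_n(q'))=\ell_{X_n}(\varphi_n(c(q,q')))\to 0$; thus $(\sigma_n)_n$ is an asymptotic homomorphism of $Q$, and $s(q)\notin K$ for $q\neq e$ forces $\ell_{X_n}(\sigma_n(q))\to 1$.

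Then I would apply flexible P-stability of $Q$ to $(\sigma_n)_n$, obtaining finite sets $Y_n\supseteq X_n$ with $|Y_n\setminus X_n|/|X_n|\to 0$ and genuine homomorphisms $\tau_n\colon Q\to{\rm Sym}(Y_n)$ satisfying $d_{X_n}(\sigma_n(q),\tau_n(q)|_{X_n})\to 0$ for every $q$. Fix $q\neq e$ and set $M_n=\{x\in X_n:\sigma_n(q)(x)\neq x\}$ and $D_n=\{x\in X_n:\sigma_n(q)(x)\neq\tau_n(q)(x)\}$; by the previous step $|M_n|/|X_n|\to 1$ while $|D_n|/|X_n|\to 0$, so $|M_n\setminus D_n|/|X_n|\ge |M_n|/|X_n|-|D_n|/|X_n|\to 1$ and in particular $M_n\setminus D_n\neq\varnothing$ for all large $n$. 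Any $x\in M_n\setminus D_n$ satisfies $\tau_n(q)(x)=\sigma_n(q)(x)\neq x$, whence $\tau_n(q)\neq{\rm id}$. Therefore every nontrivial $q$ survives in some finite quotient ${\rm Sym}(Y_n)$, i.e. $\bigcap_n\ker\tau_n=\{e\}$, and $Q$ is residually finite.

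The only place demanding real care is the first transition: passing from the sofic approximation of the free group $F$ to an honest asymptotic homomorphism of $Q$ that retains the quantitative displacement $\ell_{X_n}(\sigma_n(q))\to 1$ for nontrivial $q$, since it is this uniform displacement (rather than mere soficity) that is robust under correction. Everything afterward is an elementary density estimate, whose single subtlety is to arrange the bound $|M_n\setminus D_n|/|X_n|\ge |M_n|/|X_n|-|D_n|/|X_n|$ so that an asymptotically full set of moved points cannot be erased by a vanishing correction.
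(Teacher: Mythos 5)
Your proposal is correct and follows the same route as the paper: first apply Theorem~\ref{thm:flex-quot} to transfer flexible P-stability to $Q$, then invoke the principle that a sofic, flexibly P-stable group is residually finite. The only difference is that the paper simply cites Becker--Lubotzky \cite{BL20} for that second implication, whereas you prove it directly (via a section of a separating sequence and the observation that an $o(1)$ correction cannot cancel displacement tending to $1$), and your argument is the standard correct proof of that fact.
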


\begin{proof}
By Theorem~\ref{thm:flex-quot}, $Q$ is flexibly P-stable.
For sofic groups, flexible P-stability yields residual finiteness; see \cite{BL20}.
\end{proof}

\section{Stability in finite actions}\label{sec:sfa}

We now turn to the weaker stability notion of Gohla and the second author.
The argument below employs the strategy that residual finiteness of $G$
allows one to amplify an asymptotic homomorphism of $Q$
to a sofic approximation of $G$ without losing the partition statistics. Let's recall the notion of stability in finite actions {\cite[Definition 3.7]{GohlaThom24}}:

\begin{definition}
\label{def:sfa}
A countable group $G$ is \emph{stable in finite actions} if,
for every sofic approximation of $G$, the corresponding limit action
is weakly contained in the class of finite $G$-actions.
\end{definition}

For the notion of weak containment see \cite{Kechris12}. We will need the following two lemmas.

\begin{lemma}
\label{lem:amplify-sofic}
Let $G$ be residually finite and let
$\alpha_n\colon G\to{\rm Sym}(X_n)$ be an asymptotic homomorphism.
Then there exist finite quotients $\rho_n\colon G\to H_n$ and maps
\[
\theta_n\colon G\to{\rm Sym}(X_n\times H_n),
\qquad
\theta_n(g)(x,h):=\bigl(\alpha_n(g)(x),\ \lambda_{H_n}(\rho_n(g))(h)\bigr),
\]
such that $(\theta_n)_n$ is a sofic approximation of $G$.
Moreover, the action of $G$ on $X_n$ induced by $\alpha_n$
is a factor of the action induced by $\theta_n$ via the coordinate projection
$X_n\times H_n\to X_n$.
\end{lemma}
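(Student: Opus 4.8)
The plan is to verify directly that the product maps $\theta_n$ satisfy the two defining requirements of a sofic approximation of $G$ — asymptotic multiplicativity and asymptotic freeness — once the finite quotients $\rho_n$ have been chosen suitably, and then to read off the factor statement from the coordinate projection. First I would note that $\lambda_{H_n}\circ\rho_n\colon G\to{\rm Sym}(H_n)$ is a genuine homomorphism, being the composite of $\rho_n$ with the left-regular action. Hence for $g,h\in G$ the second coordinate of $\theta_n(g)\theta_n(h)$ equals $\rho_n(g)\rho_n(h)k=\rho_n(gh)k$, which agrees with the second coordinate of $\theta_n(gh)$ for every $k\in H_n$, while the first coordinates compare $\alpha_n(g)\alpha_n(h)(x)$ with $\alpha_n(gh)(x)$. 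Consequently the set on which $\theta_n(gh)$ and $\theta_n(g)\theta_n(h)$ disagree is $B_n\times H_n$ with $B_n=\{x\in X_n\mid \alpha_n(gh)(x)\neq\alpha_n(g)\alpha_n(h)(x)\}$, so that $d_{X_n\times H_n}(\theta_n(gh),\theta_n(g)\theta_n(h))=d_{X_n}(\alpha_n(gh),\alpha_n(g)\alpha_n(h))\to 0$; thus $(\theta_n)_n$ is an asymptotic homomorphism.

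For the freeness condition I would choose the quotients $\rho_n$ by a diagonal argument using residual finiteness. Enumerate $G=\{g_1,g_2,\dots\}$ and for each $n$ select a finite quotient $\rho_n\colon G\to H_n$ with $\rho_n(g_j)\neq e$ for all $j\le n$ with $g_j\neq e$; such a quotient exists because a finite intersection of finite-index normal subgroups again has finite index and residual finiteness separates each $g_j\neq e$ from $e$. The key observation is that for $g\neq e$ a point $(x,k)$ is fixed by $\theta_n(g)$ only if $\rho_n(g)k=k$, i.e. only if $\rho_n(g)=e$, since left-translation by a nontrivial element of a group has no fixed points. Therefore whenever $\rho_n(g)\neq e$ the permutation $\theta_n(g)$ is fixed-point-free and $\ell_{X_n\times H_n}(\theta_n(g))=1$. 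By the choice of $\rho_n$, for each fixed $g\neq e$ this holds for all sufficiently large $n$, whence $\ell_{X_n\times H_n}(\theta_n(g))\to 1$. Combined with the preceding paragraph, this shows that $(\theta_n)_n$ is a sofic approximation of $G$.

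For the factor statement I would use the coordinate projection $p_n\colon X_n\times H_n\to X_n$. By construction $p_n\bigl(\theta_n(g)(x,k)\bigr)=\alpha_n(g)(x)=\alpha_n(g)\bigl(p_n(x,k)\bigr)$, so $p_n$ intertwines $\theta_n$ and $\alpha_n$ exactly, and $p_n$ is measure-preserving for the uniform measures because every fibre $\{x\}\times H_n$ has the same cardinality $|H_n|$. Passing to the ultraproduct along a fixed non-principal ultrafilter, the maps $p_n$ assemble into a $G$-equivariant, measure-preserving surjection from the limit action of $(\theta_n)_n$ onto the limit action of $(\alpha_n)_n$, exhibiting the latter as a factor of the former.

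I expect the only real subtlety to be the simultaneous choice of the quotients $\rho_n$: the freeness of $\theta_n(g)$ is exact and automatic once $\rho_n(g)\neq e$, so the whole argument hinges on arranging, through residual finiteness and the diagonal enumeration of $G$, that every nontrivial element is eventually detected by $\rho_n$. The multiplicativity estimate and the factor statement are then essentially formal.
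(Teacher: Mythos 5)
Your proposal is correct and follows essentially the same route as the paper: exactness of $\lambda_{H_n}\circ\rho_n$ gives multiplicativity with the same defect as $\alpha_n$, a diagonal choice of finite quotients detecting ever larger finite subsets of $G$ gives fixed-point-freeness of $\theta_n(g)$ for $g\neq e$ eventually, and the coordinate projection yields the factor statement. The only cosmetic difference is that the paper phrases the choice of $\rho_n$ as injectivity on an exhaustion by finite sets rather than via an enumeration, which is equivalent for the purpose at hand.
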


\begin{proof}
Fix an increasing exhaustion $F_1\subset F_2\subset\cdots$ of $G$ by finite
sets with $e\in F_n$.
Using residual finiteness, choose $\rho_n\colon G\to H_n$ such that
$\rho_n$ is injective on $F_n$.

Since $\lambda_{H_n}\circ\rho_n$ is a genuine homomorphism,
the asymptotic multiplicativity of $\alpha_n$ implies that $\theta_n$
is an asymptotic homomorphism.
For $g\in F_n\setminus\{e\}$ we have $\rho_n(g)\ne e$, hence
$\lambda_{H_n}(\rho_n(g))$ moves every element of $H_n$.
Consequently $\theta_n(g)$ moves every element of $X_n\times H_n$ and thus
\[
\ell_{X_n\times H_n}(\theta_n(g))=1
\qquad\text{for all } g\in F_n\setminus\{e\}.
\]
Therefore $(\theta_n)$ is a sofic approximation of $G$, as desired.
Finally, the coordinate projection $X_n\times H_n\to X_n$ is $G$-equivariant
for the actions defined by $\theta_n$ and $\alpha_n$, so the latter is a factor
of the former.
\end{proof}

\begin{lemma}
\label{lem:T-upgrade-atoms}
Let $N$ be a Kazhdan group with a finite generating set
$K\subset N$. For every $d\in\mathbb{N}$ there exists $C>0$, which depends only on $K$ and $d$,
such that for every $\delta>0$ the following holds: whenever $N$ acts on a finite set $Y$ and
$Y=B_1\sqcup\cdots\sqcup B_d$ is a partition satisfying
\[
|kB_i\triangle B_i|<\delta |Y|
\qquad \forall\, k\in K,\ i=1,\dots,d,
\]
there exists a partition $Y=B_1'\sqcup\cdots\sqcup B_d'$
into $N$-invariant sets such that
\[
|B_i\triangle B_i'|\le C\,\sqrt{\delta} |Y|
\qquad \forall\, i=1,\dots,d.
\]
\end{lemma}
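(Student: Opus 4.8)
The plan is to translate the hypothesis into the permutation representation and then invoke the spectral gap furnished by property (T). First I would regard the $N$-action on $Y$ as a unitary representation $\pi$ on $\ell^2(Y)$ with the counting measure, so that $\pi(k)\mathbf 1_{B_i}=\mathbf 1_{kB_i}$ and the hypothesis becomes an almost-invariance statement, $\norm{\pi(k)\mathbf 1_{B_i}-\mathbf 1_{B_i}}^2=|kB_i\triangle B_i|<\delta|Y|$ for every $k\in K$. Let $P$ denote the orthogonal projection onto the $N$-invariant vectors, that is, the functions constant on each $N$-orbit. Since the orthogonal complement of the invariants carries no nonzero invariant vector, the Kazhdan constant $\kappa=\kappa(N,K)>0$ gives $\max_{k\in K}\norm{\pi(k)\eta-\eta}\ge\kappa\norm{\eta}$ for every $\eta$ in that complement. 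Applying this to $\eta=\mathbf 1_{B_i}-P\mathbf 1_{B_i}$, and noting $\pi(k)\eta-\eta=\pi(k)\mathbf 1_{B_i}-\mathbf 1_{B_i}$ because $P\mathbf 1_{B_i}$ is invariant, I obtain
\[
\norm{\mathbf 1_{B_i}-P\mathbf 1_{B_i}}^2\le\kappa^{-2}\max_{k\in K}|kB_i\triangle B_i|<\kappa^{-2}\delta|Y|.
\]

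Next I would make $P$ explicit. On an orbit $O$ the vector $P\mathbf 1_{B_i}$ is the constant $p_i(O):=|B_i\cap O|/|O|$, and a direct computation yields the per-orbit defect $\norm{(\mathbf 1_{B_i}-P\mathbf 1_{B_i})|_O}^2=|O|\,p_i(O)\bigl(1-p_i(O)\bigr)$. Summing over $i$ and over orbits, and writing $q(O):=\max_i p_i(O)$, the total defect equals $\sum_i\norm{\mathbf 1_{B_i}-P\mathbf 1_{B_i}}^2=\sum_O|O|\bigl(1-\sum_i p_i(O)^2\bigr)$, which by the previous estimate is at most $d\kappa^{-2}\delta|Y|$.

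Then I would define the invariant partition by majority vote: assign each orbit $O$ to a class $c(O)$ attaining $q(O)$, breaking ties arbitrarily, and set $B_i':=\bigcup_{c(O)=i}O$, a partition into $N$-invariant sets by construction. An orbit-by-orbit count shows $\sum_i|B_i\triangle B_i'|=2\sum_O|O|\bigl(1-q(O)\bigr)$. The key elementary inequality is that, since $\sum_i p_i(O)=1$, one has $\sum_i p_i(O)^2\le q(O)$, hence $1-q(O)\le 1-\sum_i p_i(O)^2$. Combining these, $\sum_i|B_i\triangle B_i'|\le 2\sum_O|O|\bigl(1-\sum_i p_i(O)^2\bigr)\le 2d\kappa^{-2}\delta|Y|$, so in particular each $|B_i\triangle B_i'|\le 2d\kappa^{-2}\delta|Y|$. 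Taking $C=2d/\kappa^2$ then gives the claimed bound, in fact with $\delta$ in place of $\sqrt\delta$ when $\delta\le 1$, while for $\delta>1$ the inequality is trivial.

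The main obstacle is conceptual rather than computational: one must recognize that approximate invariance of the atoms $\mathbf 1_{B_i}$ is exactly an almost-invariance statement in $\ell^2(Y)$, so that property (T) applies with a constant depending only on $(N,K)$, and hence, with $d$ fixed, only on $K$ and $d$. The remaining care lies in the two elementary but essential points that make the argument close: identifying $P$ with orbit-averaging, and the inequality $1-q(O)\le 1-\sum_i p_i(O)^2$ that converts the $\ell^2$ impurity bound into control on the rounding error, together with the verification that majority-vote rounding indeed produces a genuine $N$-invariant partition.
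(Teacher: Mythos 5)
Your proof is correct, and the first half coincides with the paper's: both interpret the hypothesis as almost-invariance of the vectors $\mathbf 1_{B_i}$ in $\ell^2(Y)$, apply the Kazhdan constant to bound $\|\mathbf 1_{B_i}-P\mathbf 1_{B_i}\|$, and identify $P$ as averaging over $N$-orbits so that $P\mathbf 1_{B_i}$ records the orbit densities $p_i(O)$. Where you diverge is the rounding step. The paper thresholds: it puts an orbit into $B_i'$ when $p_{i,\alpha}\ge 1-2\sqrt{\delta}$, uses Markov's inequality to control the orbits with $p_{i,\alpha}(1-p_{i,\alpha})>\sqrt{\delta}$, must separately verify that the resulting sets are pairwise disjoint, and finally dumps the unassigned leftover into $B_1'$ to recover a partition; this costs a $\sqrt{\delta}$ in the bound. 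You instead assign each orbit by majority vote, which is automatically a partition into invariant sets, and the elementary inequality $1-\max_i p_i(O)\le 1-\sum_i p_i(O)^2$ converts the summed $\ell^2$ defect $\sum_O|O|\bigl(1-\sum_i p_i(O)^2\bigr)\le d\kappa^{-2}\delta|Y|$ directly into $\sum_i|B_i\triangle B_i'|\le 2d\kappa^{-2}\delta|Y|$. Your computations check out (the per-orbit identity $\sum_i|B_i\triangle B_i'|\cap O$-count equals $2(1-q(O))|O|$, and $\sum_i p_i^2\le q\sum_i p_i=q$), so your argument is both shorter and quantitatively stronger, giving a bound linear in $\delta$ rather than $\sqrt{\delta}$; since $\delta\le\sqrt{\delta}$ for $\delta\le 1$ and the statement is vacuous for large $\delta$ once $C\ge 1$, this implies the lemma as stated. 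The only cosmetic difference is that your $\kappa$ is the reciprocal of the paper's, which is immaterial.
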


\begin{proof}
Since $N$ is Kazhdan and $K$ is a generating set, there exists
$\kappa>0$ such that for every unitary
representation $\pi:N\to \mathcal{U}(\mathcal{H})$ on a Hilbert space
$\mathcal{H}$ one has for every $v\in\mathcal{H}$
\begin{equation}\label{eq:distance-to-invariants}
\|v-Pv\|\;\le\;\kappa\,\max_{k\in K}\|\pi(k)v-v\|.
\end{equation}
Here $P:\mathcal{H}\to\mathcal{H}^N$ is the orthogonal projection onto the invariant subspace.

Consider the Hilbert space $\mathcal{H}:=\ell^2(Y,\mu)$, where $\mu$ is the normalized counting measure on $Y$. The $N$-invariant vectors are exactly the functions that are constant on
each $N$-orbit in $Y$. Write the orbit decomposition as
\[
Y=\bigsqcup_{\alpha} O_\alpha.
\]
Then the orthogonal projection $P:\mathcal{H}\to \mathcal{H}^N$ is given by
averaging on the orbits:
\[
(Pf)(y)\;=\;\frac1{|O_\alpha|}\sum_{z\in O_\alpha} f(z)
\qquad\text{for }y\in O_\alpha.
\]

Let now $Y=B_1\sqcup\cdots\sqcup B_d$ satisfy
\[
|kB_i\triangle B_i|<\delta |Y|
\qquad \forall\,k\in K,\ \forall\, i=1,\dots,d.
\]
For each $i$ set $f_i:=\mathbf{1}_{B_i}\in\ell^2(Y)$. For any $k\in N$ we
have
\[
\|\pi(k)f_i-f_i\|_2^2
=\frac{1}{|Y|}\sum_{y\in Y}\bigl|\mathbf{1}_{kB_i}(y)-\mathbf{1}_{B_i}(y)\bigr|^2
=\frac{1}{|Y|}\,|kB_i\triangle B_i|
<\delta,
\]
so that we can conclude
\begin{equation}\label{eq:fi-almost-invariant}
\max_{k\in K}\|\pi(k)f_i-f_i\|_2\;\le\;\sqrt{\delta}
\qquad\forall i=1,\dots,d.
\end{equation}

Applying \eqref{eq:distance-to-invariants} to each $f_i$ and using
\eqref{eq:fi-almost-invariant}, we obtain
\begin{equation}\label{eq:distance-fi-to-inv}
\|f_i-Pf_i\|_2\;\le\;\kappa\,\sqrt{\delta},
\qquad\forall i=1,\dots,d.
\end{equation}

For each orbit $O_\alpha$ and each $i$ define the density
\[
p_{i,\alpha}
\;:=\;\frac{|B_i\cap O_\alpha|}{|O_\alpha|}.
\]
Then by the explicit formula for $P$, we get
$(Pf_i)(y)=p_{i,\alpha}$ for all $y\in O_\alpha$.
Note that for each $\alpha$,
\[
0\le p_{i,\alpha}\le1,\qquad
\sum_{i=1}^d p_{i,\alpha}=1,
\]
since the sets $B_i$ form a partition of $Y$.
The norm $\|f_i-Pf_i\|_2^2$ can be computed orbitwise:
\begin{align*}
\|f_i-Pf_i\|_2^2
&=\frac1{|Y|}\sum_{\alpha}\sum_{y\in O_\alpha}
\bigl(\mathbf{1}_{B_i}(y)-p_{i,\alpha}\bigr)^2\\
&=\frac1{|Y|}\sum_{\alpha}|O_\alpha|\,p_{i,\alpha}(1-p_{i,\alpha}),
\end{align*}
because for $y\in O_\alpha$ the random variable
$\mathbf{1}_{B_i}(y)$ has mean $p_{i,\alpha}$ and variance
$p_{i,\alpha}(1-p_{i,\alpha})$.

Using \eqref{eq:distance-fi-to-inv} we obtain for every $i=1,\dots,d$
\begin{equation}\label{eq:S-def}
\|f_i-Pf_i\|_2^2
=  \sum_{\alpha}\frac{|O_\alpha|}{|Y|}
 p_{i,\alpha}(1-p_{i,\alpha})\le\; \kappa^2\delta.
\end{equation}

By Markov's inequality, the set of orbits $\alpha$ for which $p_{i,\alpha}(1-p_{i,\alpha})$ is bigger than $\sqrt{\delta}$ has total size at most $\kappa^2\sqrt{\delta}|Y|$.
Now fix $i\in\{1,\dots,d\}$ and define
\[
B_i' \;:=\; \bigsqcup_{\alpha:  p_{i,\alpha}\ge 1-2\sqrt{\delta}} O_\alpha.
\]
Then $B_i'$ is a union of $N$-orbits, hence $N$-invariant. Moreover, if
$O_\alpha\subset B_i'$ and $O_\alpha\subset B_j'$ with $i\ne j$, then
$p_{i,\alpha},p_{j,\alpha}\ge 1-2\sqrt{\delta}$, so
$p_{i,\alpha}+p_{j,\alpha}\ge 2-4\sqrt{\delta}>1$ for
${\delta}<\tfrac{1}{16}$, which is impossible since
$\sum_{k=1}^d p_{k,\alpha}=1$. Thus the sets $B_i'$ are pairwise disjoint.

Moreover, we have
\[
\|{Pf_i-\mathbf{1}_{B_i'}}\|_2^2 = \frac{1}{|Y|}\sum_{\substack{\alpha: p_{i,\alpha}< 1-2\sqrt{\delta}}} |O_\alpha| p_{i,\alpha}^2 \le \frac{1}{|Y|}\sum_{\substack{\alpha: p_{i,\alpha} \leq 2\sqrt{\delta}}} |O_\alpha|p_{i,\alpha}^2 + \kappa^2\sqrt{\delta} \le (2 + \kappa^2)\sqrt{\delta}.
\]
Therefore, we have
\begin{align*}
\frac{|B_i\triangle B_i'|^{1/2}}{|Y|^{1/2}} = \|{f_i-\mathbf{1}_{B_i'}}\|& \le \norm{f_i-Pf_i} + \|{Pf_i-\mathbf{1}_{B_i'}}\|
\\&\le \kappa\sqrt{\delta} + (2+\kappa^2)^{1/2}{\delta}^{1/4} \le 2(2+\kappa^2)^{1/2}{\delta}^{1/4},
\end{align*}
and hence, we obtain
\[
\frac{|B_i\triangle B_i'|}{|Y|} \le (8+4\kappa^2)\sqrt{\delta}.
\]
Finally, we observe that 
\[
|Y|-\sum_{i=1}^d |B_i'| \le \sum_{i=1}^d |B_i\triangle B_i'| \le d(8+4\kappa^2)\sqrt{\delta} |Y|.
\]
Therefore adding $Y\setminus \bigsqcup_{i=1}^d B_i'$ to $B_1'$ we obtain a partition of $Y$ into $N$-invariant sets satisfying the desired estimate.
\end{proof}

\begin{theorem}
\label{thm:quotient-sfa-kazhdan}
Assume that $G$ is residually finite and that $N$ is Kazhdan.
If $G$ is stable in finite actions, then $Q$ is stable in finite actions.
In particular, if $Q$ is sofic, then $Q$ is residually finite.
\end{theorem}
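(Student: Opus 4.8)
The plan is to prove that stability in finite actions passes to the quotient $Q$, assuming $G$ is residually finite and $N$ is Kazhdan, and then derive residual finiteness of $Q$ when $Q$ is sofic. The starting point is a sofic approximation of $Q$, which I want to convert into a sofic approximation of $G$ so that I can apply stability of $G$, and then descend the resulting finite actions back to $Q$ using the Kazhdan property of $N$.

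First I would take a sofic approximation $\sigma_n\colon Q\to{\rm Sym}(X_n)$, lift it to an asymptotic homomorphism $\bar\sigma_n=\sigma_n\circ\pi\colon G\to{\rm Sym}(X_n)$, and apply Lemma~\ref{lem:amplify-sofic}: since $G$ is residually finite, the amplified maps $\theta_n\colon G\to{\rm Sym}(X_n\times H_n)$ form a genuine sofic approximation of $G$, with the $\bar\sigma_n$-action on $X_n$ a factor of the $\theta_n$-action. Because $G$ is stable in finite actions, the limit action of this sofic approximation is weakly contained in finite $G$-actions; the goal is to replace $\theta_n$, up to small Hamming error, by honest finite-action approximations $\tau_n\colon G\to{\rm Sym}(Z_n)$ that respect the relevant statistics. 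The key technical point is that the limit action, being weakly contained in finite $G$-actions, lets us model the asymptotic behaviour of $\sigma_n$ on $Q$ using finite $G$-actions whose $N$-part is under control.

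The main step is then to use the Kazhdan property of $N$ via Lemma~\ref{lem:T-upgrade-atoms}. In the finite $G$-action approximating $\theta_n$, I would consider the partition of the point set recording the coordinate describing the $\sigma_n$-behaviour on $X_n$ — concretely, track a finite partition $Z_n=B_1\sqcup\cdots\sqcup B_d$ coming from the pullback of the factor map onto $X_n$ (with $d$ governed by the finitely many generators and elements of $Q$ being tested). Since $N$ acts $\bar\sigma_n$-almost-trivially on the $X_n$-coordinate, the atoms of this partition are approximately $N$-invariant in the Hamming/symmetric-difference sense, so the hypotheses of Lemma~\ref{lem:T-upgrade-atoms} are met with $\delta\to 0$. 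The lemma upgrades these to genuinely $N$-invariant partitions with only $O(\sqrt\delta)$ error, which means the $G$-action descends, up to vanishing Hamming distance, to a genuine $Q$-action on the orbit space $Z_n/N$. Restricting to the atoms identified with points of $X_n$ recovers honest $Q$-homomorphisms that correct $\sigma_n$, exhibiting the limit $Q$-action as weakly contained in finite $Q$-actions, which is exactly stability of $Q$ in finite actions.

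The hard part will be bookkeeping the passage between the abstract weak-containment statement for $G$ and concrete finite-set partitions to which Lemma~\ref{lem:T-upgrade-atoms} applies: one must ensure that the finitely many atoms being upgraded genuinely carry the information of $\sigma_n$ on $Q$ and that the $N$-almost-invariance constants $\delta_n$ really tend to zero uniformly over the finite test set, so that the $\sqrt{\delta_n}$ corrections vanish. For the final clause, once $Q$ is shown to be stable in finite actions and $Q$ is assumed sofic, its sofic approximation is weakly contained in finite $Q$-actions; running the flexible P-stability mechanism (or directly the argument of \cite{GohlaThom24, BL20}) then forces the separating homomorphisms into genuine finite quotients, yielding residual finiteness of $Q$.
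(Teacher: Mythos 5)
Your proposal follows essentially the same route as the paper: lift $\sigma_n$ to an asymptotic homomorphism of $G$, amplify it to a genuine sofic approximation via residual finiteness (Lemma~\ref{lem:amplify-sofic}), invoke stability of $G$ to get weak containment of the limit action in finite $G$-actions, upgrade the almost-$N$-invariant atoms to genuinely $N$-invariant ones by Lemma~\ref{lem:T-upgrade-atoms}, and descend to a finite $Q$-action; the final clause is handled by the same citation. One caution: stability in finite actions yields only matching of partition statistics for an arbitrary finite partition of $X_n$ (weak containment of the limit action), not a Hamming-distance correction of $\theta_n$ or of $\sigma_n$ by honest homomorphisms as your middle paragraph suggests --- that would be P-stability; your closing sentence about weak containment of the limit $Q$-action is the formulation you should carry through the whole argument.
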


\begin{proof}
Let $\sigma_n\colon Q\to{\rm Sym}(X_n)$ be a sofic approximation of $Q$.
View it as an asymptotic homomorphism of $G$ by setting
\[
\alpha_n := \sigma_n\circ \pi\colon G\to{\rm Sym}(X_n).
\]
Since $\pi(N)=\{e\}$, the maps $\alpha_n$ are asymptotically trivial on $N$: $\ell_{X_n}(\alpha_n(g))\to 0$ for all $g\in N$.

By Lemma~\ref{lem:amplify-sofic}, using residual finiteness of $G$,
we may choose finite quotients $\rho_n\colon G\to H_n$ and obtain a
sofic approximation
\[
\theta_n\colon G\to{\rm Sym}(X_n\times H_n)
\]
whose associated actions admit the $G$-actions defined by $(\alpha_n)_n$
as factors via the coordinate projection.
In particular, the limit action of $(\alpha_n)_n$ is a factor of the
limit action of $(\theta_n)_n$.

Assume that $G$ is stable in finite actions.
Then the limit action of the sofic approximation $(\theta_n)_n$ is weakly
contained in the class of finite $G$-actions.
Since weak containment passes to factors, the same holds for the limit action
of $(\alpha_n)$.

Fix a finite set $E\subset Q$, an integer $d\ge 1$, and $\varepsilon>0$.
Choose a finite set $\bar E\subset G$ with $\pi(\bar E)=E$.
Let $K\subset N$ be a finite generating set.
Consider an arbitrary partition
\[
X_n=A_1\sqcup\cdots\sqcup A_d.
\]

The weak containment of the limit action of $(\alpha_n)$ in finite $G$-actions
means that for large $n$, the statistics of the partition $(A_i)_i$ under the
finite set $\bar E\cup K$ can be modelled by a genuine finite action of
$G$:
there exist a finite $G$-set $Y$, a homomorphism
$\tau\colon G\to{\rm Sym}(Y)$ and a partition $Y=B_1\sqcup\cdots\sqcup B_d$
such that
\[
\left|
\frac{|A_i\cap \alpha_n(g)A_j|}{|X_n|}
-
\frac{|B_i\cap \tau(g)B_j|}{|Y|}
\right|<\varepsilon
\qquad \forall\, g\in \bar E\cup K,\ 1\le i,j\le d.
\]

For $g\in K$, the $\ell(\alpha_n(g))\to 0$ as $n\to\infty$. Therefore 
\[
\sum_{i=1}^d \frac{|A_i\triangle (\alpha_n(g)A_i)|}{|X_n|} < \varepsilon
\]
for large enough $n$. Therefore
\[
\sum_{i=1}^d \frac{|B_i\triangle \tau(g)B_i|}{|Y|} < (d+1)\varepsilon
\]
and hence each $B_i$ will satisfy 
\[
\left|\frac{|B_i\triangle \tau(g)B_i|}{|Y|}\right| < (d+1)\varepsilon
\]
for $g\in K, 1\leq i \leq d$.
 
Applying Lemma~\ref{lem:T-upgrade-atoms}, we may replace $(B_i)$ by an
$N$-invariant partition $(B_i')$ such that
\[
\left|\frac{|B_i\triangle B_i'|}{|Y|}\right| < C'\sqrt{\varepsilon}
\]
for some $C'>0$ depending only on $d$ and the Kazhdan constant for $K\subset N$. Since $N\lhd G$, the induced action of $G$ on the finite Boolean algebra generated by the atoms $B_1',\dots,B_d'$ factors through $Q$.
Thus we obtain a finite $Q$-action whose partition statistics approximate those
of the partition $(A_i)_i$ under the set $E$, proving that $Q$ is stable in finite actions.
If $Q$ is sofic, then stability in finite actions implies residual finiteness, using results in \cite{GohlaThom24}.
\end{proof}

\section{Residually finite-dimensional groups}\label{sec:RFD}

A unital C$^*$-algebra $A$ is called \emph{residually finite-dimensional}
(\emph{RFD}) if the family of all finite-dimensional $^*$-representations of $A$
separates points. Equivalently,
\[
\bigcap_{\pi \in \mathrm{Rep}_{\mathrm{fd}}(A)} \ker(\pi)=\{0\},
\]
or, what is the same, $A$ embeds into a possibly infinite product of full
matrix algebras. This property can be viewed as a strong finite-dimensional
approximation property and plays a role analogous to residual finiteness for
discrete groups.

\begin{definition}
A countable group $G$ is called \emph{residually finite-dimensional (RFD)}
if its full group $C^*$-algebra $C^*(G)$ is residually finite-dimensional.
\end{definition}

Choi showed that the full group $C^*$-algebra of the free group $F_2$ is RFD \cite{Choi80}. On the positive side, Lubotzky and Shalom introduced and studied the representation-theoretic property~(FD) and established it for fundamental groups of closed surfaces; as a consequence, these groups are RFD full group $C^*$-algebras \cite{LuSh04}. For fundamental groups of closed hyperbolic $3$-manifolds, residual finite dimensionality of the full group $C^*$-algebra also holds; see Remark~\ref{rem:3dRFD}. A striking contrast appears for higher-rank arithmetic groups. Bekka proved
that if $\Gamma$ has the congruence subgroup property, then the full group
C$^*$-algebra $C^*(\Gamma)$ does \emph{not} admit a separating family of
finite-dimensional representations \cite{Bekka99}. In particular, $SL_3(\mathbb Z)$ is not RFD,
and more generally $SL_n(\mathbb Z)$ fails to be RFD for all $n\ge 3$.

We will use the following standard characterization (see, for instance,
\cite[Proposition~2.3.2]{BrownOzawa}): a full group $C^*$-algebra $C^*(G)$ is
residually finite-dimensional if and only if every unitary representation of
$G$ is weakly contained in the family of finite-dimensional unitary
representations of $G$.

\begin{proposition}
\label{prop:RFD-passes}
Assume that $N$ is Kazhdan.
If $G$ is RFD, then $Q$ is RFD.
\end{proposition}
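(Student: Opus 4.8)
The plan is to use the weak-containment characterization of RFD stated just before the proposition: $C^*(Q)$ is RFD if and only if every unitary representation of $Q$ is weakly contained in the finite-dimensional representations of $Q$. So I would start with an arbitrary unitary representation $\rho\colon Q\to\mathcal{U}(\mathcal{H})$ and inflate it to $G$ via $\bar\rho:=\rho\circ\pi$. Since $G$ is RFD, $\bar\rho$ is weakly contained in the finite-dimensional representations of $G$. Concretely, weak containment means that for every finite set $E\subset Q$, every $\varepsilon>0$, and every finite collection of unit vectors $\xi_1,\dots,\xi_r\in\mathcal{H}$, there is a finite-dimensional representation $\pi_0\colon G\to\mathcal{U}(V)$ and vectors $\eta_1,\dots,\eta_r\in V$ whose matrix coefficients $\langle\pi_0(\bar g)\eta_i,\eta_j\rangle$ approximate $\langle\bar\rho(\bar g)\xi_i,\xi_j\rangle$ to within $\varepsilon$ over all $\bar g\in\pi^{-1}(E)$ and all generators of $N$.

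The heart of the matter, and where the Kazhdan hypothesis enters, is to turn such a finite-dimensional representation $\pi_0$ of $G$ into one that factors through $Q$, at a controlled cost. Because the target vectors $\xi_i$ come from a representation of $Q$, they satisfy $\bar\rho(k)\xi_i=\xi_i$ for every $k\in N$; hence the approximating $\eta_i$ are almost $N$-invariant: $\|\pi_0(k)\eta_i-\eta_i\|$ is small for $k$ in the chosen generating set $K$ of $N$. This is the finite-dimensional, vector-level analogue of the atom-level statement in Lemma~\ref{lem:T-upgrade-atoms}. Here I would invoke property (T) for $N$ directly in the form \eqref{eq:distance-to-invariants}: with $P_0\colon V\to V^{N}$ the orthogonal projection onto the $N$-invariant subspace (which exists since $V$ is finite-dimensional and hence $\pi_0$ decomposes as a direct sum of irreducibles, splitting off the invariants), the Kazhdan constant gives $\|\eta_i-P_0\eta_i\|\le\kappa\max_{k\in K}\|\pi_0(k)\eta_i-\eta_i\|$, which is small. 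Thus replacing $\eta_i$ by $P_0\eta_i$ changes the matrix coefficients only slightly.

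Now the key structural point is that $V^{N}$ is $G$-invariant: since $N\lhd G$, for $v\in V^{N}$ and $g\in G$ one has $\pi_0(k)\pi_0(g)v=\pi_0(g)\pi_0(g^{-1}kg)v=\pi_0(g)v$ because $g^{-1}kg\in N$. Therefore the compression $\pi_0^{N}:=\pi_0(\cdot)|_{V^{N}}$ is a genuine finite-dimensional representation of $G$ on which $N$ acts trivially, and so it factors through $Q$, yielding a finite-dimensional representation $\bar\pi_0\colon Q\to\mathcal{U}(V^{N})$. The vectors $P_0\eta_i\in V^{N}$ then have matrix coefficients under $\bar\pi_0$ (indexed by $q\in E$) that approximate those of the $\eta_i$ under $\pi_0$ restricted to $\pi^{-1}(E)$, hence approximate the original coefficients $\langle\rho(q)\xi_i,\xi_j\rangle$ after combining the three error terms. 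Letting $E$, the vectors, and $\varepsilon$ range shows that $\rho$ is weakly contained in the finite-dimensional representations of $Q$, so $Q$ is RFD.

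I expect the main obstacle to be purely bookkeeping: assembling the three approximations — the original weak containment of $\bar\rho$ in $\mathrm{Rep}_{\mathrm{fd}}(G)$, the (T)-driven projection onto $V^{N}$, and the passage to matrix coefficients indexed by $Q$ — into a single clean $\varepsilon$-estimate with a uniform choice of the Kazhdan constant $\kappa$ independent of $V$. The conceptual steps (inflate, apply RFD of $G$, project to invariants using (T), compress to the $G$-stable subspace $V^{N}$, factor through $Q$) are exactly parallel to the p.m.p.\ argument in Theorem~\ref{thm:quotient-sfa-kazhdan}, with Hilbert-space vectors in place of indicator functions of atoms.
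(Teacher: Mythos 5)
Your proof is correct and follows essentially the same route as the paper's: inflate the representation to $G$, use RFD of $G$ to approximate matrix coefficients by finite-dimensional representations, note that the approximating vectors are almost $K$-invariant since the target representation kills $N$, project onto the $G$-invariant subspace $V^{N}$ using the Kazhdan constant, and compress to get a finite-dimensional representation factoring through $Q$. If anything, your explicit quantitative bound $\|\eta_i-P_0\eta_i\|\le\kappa\max_{k\in K}\|\pi_0(k)\eta_i-\eta_i\|$ spells out the step that the paper's proof only gestures at when it says the invariant subspace is nonzero and the compression ``approximates'' the original coefficients.
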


\begin{proof}
Let $\pi\colon G\to Q$ be the quotient map.
Take an arbitrary unitary representation $\sigma$ of $Q$ and consider the
induced representation $\bar{\sigma}:=\sigma\circ\pi$ of $G$.
Assuming that $G$ is RFD, the representation $\bar{\sigma}$
is weakly contained in finite-dimensional unitary representations of $G$.

Fix a finite set $F\subset Q$ and $\varepsilon>0$, and let
$\bar F\subset G$ be a finite set of lifts. Let $K$ be a finite generating set for $N$.
Weak containment provides a finite-dimensional representation
$\rho\colon G\to U(d)$ and a unit vector $\xi$
whose matrix coefficients approximate those of $\bar{\sigma}$
on $\bar F \cup K$ up to $\varepsilon$.
In particular, since $\bar{\sigma}|_N$ is trivial, the vector $\xi$
is almost $K$-invariant.
By property {\rm (T)} of $N$,
the representation space of $\rho$ contains a nonzero $N$-invariant vector.
Let $V^N$ denote the $N$-fixed subspace.
Because $N\lhd G$, the space $V^N$ is $G$-invariant and the restricted representation
$\rho|_{V^N}$ factors through $Q=G/N$.
Compressing $\rho$ to $V^N$ yields a finite-dimensional representation of $Q$
whose matrix coefficients approximate those of $\sigma$ on $F$.
Since $F$ and $\varepsilon$ were arbitrary,
this shows that $\sigma$ is weakly contained in finite-dimensional
representations of $Q$.
Thus, we conclude that $Q$ is RFD.
\end{proof}

\begin{remark}\label{rem:3dRFD}
Let $M$ be a closed hyperbolic $3$-manifold and $G = \pi_1(M)$. It is known that $G$ has property~\textnormal{(FD)} in the sense of Lubotzky--Shalom. See for example Aschenbrenner--Friedl--Wilton \cite[(H.28) and (I.6)]{AschenbrennerFriedlWilton15}, where property~\textnormal{(FD)} for $G$ is deduced from work of Lubotzky--Shalom \cite{LuSh04} together with Agol's virtual fibering theorem and the Haglund--Wise theory of special cube complexes. Since property~\textnormal{(FD)} means that finite-image unitary representations are dense in the unitary dual, it immediately implies that the full group $C^*$-algebra $C^*(G)$ is residually finite-dimensional. In particular, for closed hyperbolic $3$-manifolds the universal group $C^*$-algebra is RFD.
\end{remark}

\section{Measured sofic approximations}

\label{sec:meassof}

Let $(X,\mu)$ be a finite probability measure space.
For permutations $\alpha,\beta\in\mathrm{Sym}(X)$, we set $
d_\mu(\alpha,\beta):=\mu\bigl(\{x\in X:\alpha(x)\neq\beta(x)\}\bigr)$. Recall also the total variation distance on probability measures: $
\|\nu-\nu'\|_{\mathrm{TV}}:=\sup_{A\subseteq X}|\nu(A)-\nu'(A)|.$ In some situations, a natural variation of soficity arises naturally, where one would not just act on a set but on a finite probability space and mistakes are measured by the metrics above --  definitions are straightforward. We do not spell this out in full detail since the following lemma allows to reduce such a \emph{measured} sofic approximation back to an ordinary one.

\begin{lemma}\label{lem:uniformize-measure}
Let $G$ be a group, $(X,\mu)$ be a finite measure space, $F \subset G$ be finite and $\sigma\colon F\to\mathrm{Sym}(X)$.
Assume that for some $\varepsilon\ge 0$:
\begin{enumerate}
\item[\textup{(a)}] For all $g,h\in F$ with $gh\in F$,
$
d_\mu\bigl(\sigma(gh),\sigma(g)\sigma(h)\bigr)\le \varepsilon.
$
\item[\textup{(b)}] For all $g\in F$,
$
\bigl\|\sigma(g)_{*}(\mu)-\mu\bigr\|_{\mathrm{TV}}\le \varepsilon.$
\end{enumerate}
Then for every integer $N\ge |X|$ there exist a finite set $\bar X$ with $|\bar X|=N$ and maps $p\colon\bar X\to X$, $\bar \sigma \colon F \to \mathrm{Sym}(\bar X)$
such that the following conditions hold:
\begin{enumerate}
\item[\textup{(i)}]
For every $g\in F$,
\[
\bigl|\bigl\{y\in\bar X:\ p(\bar\sigma(g)y)\neq \sigma(g)(p(y))\bigr\} \bigr|
\ \le\ N \varepsilon+|X|.
\]
\item[\textup{(ii)}]
For all $g,h\in F$ with $gh\in F$,
\[
d_{\bar X}\bigl(\bar\sigma(gh),\bar\sigma(g)\bar\sigma(h)\bigr)
\ \le\ 4\varepsilon+\frac{4|X|}{N}.
\]
\end{enumerate}
\end{lemma}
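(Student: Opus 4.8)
The plan is to realize $\bar X$ as a \emph{blow-up} of $X$ in which each point is duplicated a number of times proportional to its mass, and then to lift each $\sigma(g)$ to $\bar X$ by matching duplicates carrying the same label whenever possible. First I would fix the fiber sizes: for each $x\in X$ set $m_x := \lfloor N\mu(\{x\})\rfloor$, and then raise $N-\sum_x m_x$ of these integers by one so that $\sum_x m_x = N$. Since $\sum_x N\mu(\{x\}) = N$, the number to be raised equals $\sum_x\{N\mu(\{x\})\}$, an integer in $\{0,\dots,|X|-1\}\subseteq\{0,\dots,N-1\}$, so this is possible and yields $|m_x - N\mu(\{x\})|\le 1$ for every $x$. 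I then put $\bar X := \bigsqcup_{x\in X}\{x\}\times\{1,\dots,m_x\}$, so $|\bar X| = N$, with projection $p(x,i) := x$.

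Next I would define the lifts. For $g\in F$ set $\bar\sigma(g)(x,i) := (\sigma(g)x,\,i)$ whenever $i\le \min(m_x, m_{\sigma(g)x})$. As $x$ ranges over $X$ and $\sigma(g)$ is a bijection, these label-preserving assignments have disjoint images; they leave exactly $\sum_x (m_x - m_{\sigma(g)x})^+$ source points and, by the identity $\sum_x(m_x-m_{\sigma(g)x})^+=\sum_x(m_{\sigma(g)x}-m_x)^+$, the same number of target slots uncovered, so I complete $\bar\sigma(g)$ to a permutation of $\bar X$ by any bijection between the two leftover sets. The governing quantity is the number of leftover points $\tfrac12\sum_x|m_x - m_{\sigma(g)x}|$. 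Using $m_x = N\mu(\{x\})+O(1)$ with error at most $1$, together with the discrete identity $\sum_x|\mu(\{x\})-\mu(\{\sigma(g)x\})| = 2\|\sigma(g)_*\mu-\mu\|_{\mathrm{TV}}$, assumption (b) gives $\tfrac12\sum_x|m_x-m_{\sigma(g)x}|\le N\varepsilon+|X|$. Off the leftover set one has $p(\bar\sigma(g)y)=\sigma(g)(p(y))$ by construction, so this is exactly estimate (i).

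For (ii) I would compare $\bar\sigma(gh)$ and $\bar\sigma(g)\bar\sigma(h)$ pointwise. Unwinding the definition, $\bar\sigma(g)\bar\sigma(h)(x,i) = (\sigma(g)\sigma(h)x,\,i)$ as soon as $i\le\min(m_x, m_{\sigma(h)x}, m_{\sigma(g)\sigma(h)x})$, while $\bar\sigma(gh)(x,i) = (\sigma(gh)x,\,i)$ as soon as $i\le\min(m_x, m_{\sigma(gh)x})$; hence the two agree whenever $\sigma(gh)x = \sigma(g)\sigma(h)x$ and $i$ lies below all relevant fiber sizes. The points where they may differ thus lie in the union of (I) the fibers over $\{x:\sigma(gh)x\ne\sigma(g)\sigma(h)x\}$ and (II) the high-label points $i>\min(m_x, m_{\sigma(h)x}, m_{\sigma(g)\sigma(h)x})$ over the remaining $x$, where $m_{\sigma(gh)x}=m_{\sigma(g)\sigma(h)x}$ so the fourfold minimum collapses to a threefold one. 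Set (I) has $\mu$-mass at most $\varepsilon$ by assumption (a), hence carries at most $N\varepsilon+|X|$ points. For (II), the bound $m_x - \min(m_x, m_{\sigma(h)x}, m_{\sigma(g)\sigma(h)x}) \le (m_x - m_{\sigma(h)x})^+ + (m_x - m_{\sigma(g)\sigma(h)x})^+$ reduces everything to two fiber-mismatch sums of the type already estimated; using that pushforward by a bijection is a $\mathrm{TV}$-isometry gives $\|(\sigma(g)\sigma(h))_*\mu-\mu\|_{\mathrm{TV}}\le 2\varepsilon$, so (II)$\,\le 3N\varepsilon+2|X|$. Adding (I) and (II) yields at most $4N\varepsilon+3|X|$ bad points, i.e. $d_{\bar X}(\bar\sigma(gh),\bar\sigma(g)\bar\sigma(h))\le 4\varepsilon+3|X|/N$, within the claimed bound.

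The only genuinely delicate point is the within-fiber coherence needed for (II): it is not enough that $\bar\sigma(gh)$ and $\bar\sigma(g)\bar\sigma(h)$ land in the same fiber, they must hit the \emph{same element} of it. The label-preserving definition of the diagonal part of each lift is precisely what forces agreement on the low-label points, and the residual discrepancy is absorbed by the fiber-size mismatch sums, which are small exactly because $\sigma$ is approximately measure-preserving. Everything else is routine rounding control and triangle-inequality bookkeeping.
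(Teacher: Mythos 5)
Your proposal is correct and follows essentially the same route as the paper: the same blow-up $\bar X=\bigsqcup_x\{x\}\times\{1,\dots,m_x\}$ with $|m_x-N\mu(\{x\})|\le 1$, the same label-preserving partial lifts $(x,i)\mapsto(\sigma(g)x,i)$ extended arbitrarily, and the same total-variation estimates. Your bookkeeping for (ii) via fiber-size mismatch sums is a slightly cleaner way of handling the within-fiber coherence that the paper treats via the union of the defect sets $A_{g,h}$, $B_g$, $B_h$, $B_{gh}$, but the argument is the same in substance and your constants are within the stated bounds.
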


\begin{proof}
Choose integers $m_x\in\{0,1,\dots,N\}$ such that $\sum_{x\in X}m_x=N$ and
\[
\Bigl|\mu(x)-\frac{m_x}{N}\Bigr|\le \frac{1}{N}\qquad \forall x\in X.
\]
Define the discretized measure $\mu_N(x):=m_x/N$. Then
\[
\|\mu_N-\mu\|_{\mathrm{TV}}
=\frac12\sum_{x\in X}\Bigl|\mu_N(x)-\mu(x)\Bigr|
\ \le\ \frac{|X|}{2N}.
\]

Now, let $
\bar X:=\bigsqcup_{x\in X}\{x\}\times\{1,2,\dots,m_x\}$, and
$p(x,i):=x.$ Then $|\bar X|=N$ and $p_{*}(u)=\mu_N$, where $u$ denotes the uniform measure on $\bar X$

Fix $g\in F$. For each $x\in X$ set $r_x:=\min\{m_x,m_{\sigma(g)x}\}$ and define a partial bijection
\[
\psi_g(x,i):=(\sigma(g)x,i)\qquad\text{for }1\le i\le r_x.
\]
Because $\sigma(g)$ is a permutation of $X$, the images for different $x$ land in disjoint fibres,
so $\psi_g$ is injective. Its domain and image have the same cardinality $\sum_x r_x$.
Extend $\psi_g$ arbitrarily to a full permutation $\bar\sigma(g)\in\mathrm{Sym}(\bar X)$.

We set $
B_g:=\{y\in\bar X:\ p(\bar\sigma(g)y)\neq \sigma(g)(p(y))\}.$
By construction, $B_g$ consists exactly of the points not in the domain of $\psi_g$, hence
\[
|B_g|
=\sum_{x\in X}(m_x-r_x)
=\frac12\sum_{x\in X}\bigl|m_x-m_{\sigma(g)x}\bigr|.
\]
Dividing by $N$ gives
\[
u(B_g)
=\frac{1}{2}\sum_{x\in X}\bigl|\mu_N(x)-\mu_N(\sigma(g)^{-1}x)\bigr|
=\bigl\|\sigma(g)_{*}(\mu_N)-\mu_N\bigr\|_{\mathrm{TV}}.
\]
Using the triangle inequality and invariance of total variation under pushforward,
\begin{eqnarray*}
\bigl\|\sigma(g)_{*}(\mu_N)-\mu_N\bigr\|_{\mathrm{TV}}
&\le&
\bigl\|\sigma(g)_{*}(\mu_N)-\sigma(g)_{*}(\mu)\bigr\|_{\mathrm{TV}}
\\
&& +\bigl\|\sigma(g)_{*}(\mu)-\mu\bigr\|_{\mathrm{TV}} 
+\|\mu-\mu_N\|_{\mathrm{TV}} \\
&\le&
2\|\mu-\mu_N\|_{\mathrm{TV}}+\varepsilon \\
&\le& 
\varepsilon+\frac{|X|}{N},
\end{eqnarray*}
which proves (i). Now, fix $g,h\in F$ with $gh\in F$ and define the multiplicativity defect set
\[
A_{g,h}:=\{x\in X:\ \sigma(gh)x\neq \sigma(g)\sigma(h)x\}.
\]
By (a), $\mu(A_{g,h})\le\varepsilon$, hence
\[
\mu_N(A_{g,h})\le \mu(A_{g,h})+\|\mu_N-\mu\|_{\mathrm{TV}}
\le \varepsilon+\frac{|X|}{2N}.
\]
Now consider $y\in\bar X$ which simultaneously satisfies: $p(y)\notin A_{g,h}$, and $y\notin B_h$, $\bar\sigma(h)y\notin B_g$, and $y\notin B_{gh}$.
Then $p(\bar\sigma(h)y)=\sigma(h)p(y)$, and
$p(\bar\sigma(g)\bar\sigma(h)y)=\sigma(g)\sigma(h)p(y)$, and also
$p(\bar\sigma(gh)y)=\sigma(gh)p(y)$. Since $p(y)\notin A_{g,h}$, the last two coincide, and because
on the good parts our lifts act fibrewise by $(x,i)\mapsto(\sigma(\cdot)x,i)$, we get
\[
\bar\sigma(gh)y=\bar\sigma(g)\bar\sigma(h)y.
\]
Hence, we obtain
\[
d\bigl(\bar\sigma(gh),\bar\sigma(g)\bar\sigma(h)\bigr)
\le
u\bigl(p^{-1}(A_{g,h})\bigr)+u(B_h)+u(B_g)+u(B_{gh}).
\]
But $u(p^{-1}(A_{g,h}))=\mu_N(A_{g,h})$, and by (i) each $u(B_\cdot)\le \varepsilon+|X|/N$, so
\[
d_{\bar X}\bigl(\bar\sigma(gh),\bar\sigma(g)\bar\sigma(h)\bigr)
\le
\Bigl(\varepsilon+\frac{|X|}{2N}\Bigr)+3\Bigl(\varepsilon+\frac{|X|}{N}\Bigr)
\le
4\varepsilon+\frac{4|X|}{N}.
\]
This proves (ii).
\end{proof}

\section{Soficity of quotient groups}\label{sec:sofic-shift}

We now record an observation about sofic models of the Bernoulli shift.
A p.m.p.\ action of a countable group $G$ on $(Z,\nu)$ is called \emph{sofic}
if there exists a sofic approximation $(\sigma_n)$ of $G$ and maps
$\varphi_n\colon V_n\to Z$ whose empirical distributions of finite patterns agree
with those of $(Z,\nu)$ up to an error tending to $0$; see, for instance,
Bowen's formulation in \cite{BowenSoficEntropy}.

\begin{definition}\label{def:ess-disjoin}
Let $G$ be a finitely generated group with a fixed finite symmetric generating set
$S$. A sofic approximation $(\sigma_n\colon G\to\mathrm{Sym}(V_n))_n$ is called
\emph{essentially a disjoint union of expanders} if, after removing a set
$E_n\subset V_n$ with $|E_n| = o(|V_n|)$ for $n\to \infty$, the Schreier graph of $G$ on
$V_n' := V_n\setminus E_n$ with respect to $S$ decomposes as a disjoint union
\[
V_n' \;=\; \bigsqcup_{C\in\mathcal{C}_n} C
\]
of finite connected components $C$, and there exists $\varepsilon>0$ such that
each component $C$ (with the induced partial $S$–Schreier graph structure) has Cheeger
constant at least $\varepsilon$, uniformly in $n$ and in $C\in\mathcal{C}_n$.
\end{definition}

A striking result of Kun \cite[Theorem~1]{KunSoficT} says that all sofic approximations of Kazhdan groups are essentially disjoint unions of expanders. We will use this result in the proof of the following proposition.

\begin{proposition}
\label{prop:sofic-shift}
Assume that $N$ is Kazhdan and consider the Bernoulli shift
$Q\curvearrowright \{0,1\}^{Q}$ with the fair base measure. If this induced $G$-action is sofic, then $Q$ is sofic.
\end{proposition}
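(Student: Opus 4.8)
The plan is to convert a sofic model of the $G$-action on $\{0,1\}^{Q}$ into a \emph{measured} sofic approximation of the quotient $Q$, by collapsing the approximate $N$-orbit structure produced by Kun's theorem and then uniformizing via Lemma~\ref{lem:uniformize-measure}. First I would fix a sofic model of the action: a sofic approximation $\sigma_n\colon G\to\mathrm{Sym}(V_n)$ together with maps $\varphi_n\colon V_n\to\{0,1\}^{Q}$ whose finite-pattern statistics converge to those of the fair Bernoulli measure. In particular $(\sigma_n|_N)_n$ is a sofic approximation of the Kazhdan group $N$, so by Kun's theorem \cite[Theorem~1]{KunSoficT} it is essentially a disjoint union of expanders: after deleting a set $E_n$ with $|E_n|=o(|V_n|)$, the $N$-Schreier graph on $V_n':=V_n\setminus E_n$ splits into components $\mathcal{C}_n$ whose Cheeger constants are bounded below by a fixed $\varepsilon_0>0$. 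Because the action factors through $\pi$, the subgroup $N$ acts trivially on $\{0,1\}^{Q}$, so the pattern statistics force each coordinate coloring $\chi_q(v):=\varphi_n(v)(q)$ to be asymptotically $\sigma_n(K)$-invariant; that is, the level sets of $\chi_q$ have edge boundary $o(|V_n|)$ in the $N$-Schreier graph.

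Next I would run the expander (Cheeger) inequality component by component: a $\{0,1\}$-valued function whose edge boundary is a $\theta$-fraction of a component of Cheeger constant $\ge\varepsilon_0$ is within $\theta/\varepsilon_0$ of a constant. Summing the globally small boundaries over $\mathcal{C}_n$ and taking a union bound over a finite window $T\subset Q$, I obtain that off a set of size $o(|V_n|)$ the restriction $\varphi_n|_T$ is constant on each component, giving a pattern map $\Phi_n\colon\mathcal{C}_n\to\{0,1\}^{T}$. I would equip the component set $\mathcal{O}_n:=\mathcal{C}_n$ with the pushforward measure $\mu_n(C)=|C|/|V_n'|$. Since $N\lhd G$, each $\sigma_n(g)$ approximately preserves the $N$-Schreier structure and hence approximately permutes the components; as the generators of $N$ fix every component setwise, the induced permutations of $\mathcal{O}_n$ depend, up to a vanishing proportion, only on $\pi(g)\in Q$, yielding an approximately multiplicative, measure-preserving map $\bar\sigma_n\colon Q\to\mathrm{Sym}(\mathcal{O}_n)$. (Alternatively, one could obtain the same component-constancy using property~(T) directly through Lemma~\ref{lem:T-upgrade-atoms} applied to the level-set partition.)

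It remains to verify that $(\mathcal{O}_n,\mu_n,\bar\sigma_n)$ is a measured sofic approximation of $Q$. Approximate multiplicativity and approximate measure preservation follow from the construction; the essential point is that for each $q\ne e$ the map $\bar\sigma_n(q)$ moves almost all of the $\mu_n$-mass. Here I would combine three ingredients: on a component $C$ the pattern $\Phi_n(C)=p$ is constant; the equivariance statistics say that, off a set $D_q\subset V_n$ of density $o(1)$, transporting along a lift of $q$ shifts the window pattern by $q$; and the Bernoulli shift $Q\curvearrowright\{0,1\}^{Q}$ is essentially free, so patterns that are $q$-shift-invariant on the overlap window carry small Bernoulli mass. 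Consequently a component fixed by $\bar\sigma_n(q)$ either carries a $q$-shift-invariant pattern or is entirely contained in $D_q$, and both possibilities have total $\mu_n$-mass tending to $0$ as $T\uparrow Q$ and $n\to\infty$ (using that $\Phi_{n*}\mu_n$ converges to the Bernoulli measure on $\{0,1\}^{T}$). Thus $\bar\sigma_n(q)$ moves $\mu_n$-mass tending to $1$, and finally I would invoke Lemma~\ref{lem:uniformize-measure} to replace the weighted component space by genuine finite sets, producing an honest sofic approximation of $Q$; the freeness bound on $\mathcal{O}_n$ transfers through the intertwiner $p$ of that lemma to give $\ell(\bar\sigma(q))\to 1$, proving $Q$ sofic.

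I expect the main obstacle to be exactly this transfer of essential freeness. The sofic statistics control only averaged quantities, whereas the freeness of $\bar\sigma_n$ requires control of individual, possibly very small, components; it is Kun's expander structure that bridges the gap by forcing the coloring to be locally constant, so that the $o(1)$ average equivariance error cannot concentrate inside the fixed components. Making the interplay between the fixed window $T$, the Kazhdan/Cheeger constant $\varepsilon_0$, and the order of limits precise is the most delicate bookkeeping in the argument.
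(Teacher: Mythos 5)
Your proposal follows the same architecture as the paper's proof: pass to the $N$-Schreier components via Kun's theorem, use expansion to make the colouring essentially constant on components, induce a measured almost-action of $Q$ on the weighted component space $(\mathcal{C}_n,\mu_n)$, and convert back to a genuine sofic approximation with Lemma~\ref{lem:uniformize-measure}. The one genuine difference is in the separation step: the paper works with the single coordinate set $A=\{x: x(e)=1\}$, obtains that each $q\neq e$ moves $\mu_n$-mass at least $\tfrac14-o(1)$ from the Bernoulli statistic $\nu(\{x: x(e)=1,\ (q\cdot x)(e)=0\})=\tfrac14$, and then amplifies to mass $1-\varepsilon$ by the standard $k$-fold product trick; you instead track a growing finite window $T\subset Q$, record the whole pattern $\Phi_n(C)\in\{0,1\}^T$ of each component, and use essential freeness of the Bernoulli shift (the measure of $q$-shift-invariant window patterns decays like $2^{-c|T|}$) to get moved mass tending to $1$ directly, trading the product trick for the bookkeeping of a union bound over $T$ and the order of limits in $T$ and $n$. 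Both routes work; yours avoids one amplification step at the cost of controlling $|T|$ level sets simultaneously, and you are right that the delicate point in either version is the majority-component argument showing that $\sigma_n(g)$ sends almost all of each component into a single target component -- this is where normality of $N$ and the expander property of the \emph{target} components enter, and it deserves the careful defect-set estimate the paper spells out rather than the one-line appeal to ``approximately preserves the $N$-Schreier structure.''
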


\begin{proof}
Let $(\sigma_n\colon G\to{\rm Sym}(V_n))_n$ be a sofic approximation modelling
the above $G$-action, with associated labellings
$\varphi_n\colon V_n\to \{0,1\}^Q$.

Fix a finite symmetric generating set $S_N$ of $N$.
By Kun's theorem, after discarding a set of
$o(|V_n|)$ vertices we may assume that the $N$--Schreier graph on
\[
V_n' \;=\; \bigsqcup_{C\in\mathcal{C}_n} C
\]
decomposes into connected components $C\in\mathcal{C}_n$ which form a family
of expanders with a uniform Cheeger constant $c_N>0$ (with respect to $S_N$).
We put a probability measure on $\mathcal{C}_n$ by
\[
\mu_n(\{C\}) \;:=\; \frac{|C|}{|V_n'|}.
\]
Let $A:=\{x\in\{0,1\}^Q \mid x(e)=1\}$, and set $A_n:=\varphi_n^{-1}(A)\cap V_n'$.
Then
\[
\frac{|A_n|}{|V_n'|}\longrightarrow \frac12.
\]
Since $A$ is $N$-invariant, soficity of the action implies that for every
$s\in S_N$,
\[
\frac{|\sigma_n(s)A_n\triangle A_n|}{|V_n'|}\longrightarrow 0.
\]
Consequently, there exists a sequence $\varepsilon_n\to 0$ such that, for all
$n$ large enough and all $s\in S_N$,
\[
|\sigma_n(s)A_n\triangle A_n|\ \le\ \varepsilon_n\,|V_n'|.
\]
By expansion (on each $N$-component), there exists a union $A_n'\subseteq V_n'$ of
$N$-components such that
\[
|A_n\triangle A_n'|\ \le\ \varepsilon'_n\,|V_n'|.
\]
for a possibly different sequence $\varepsilon'_n\to 0$.
Fix a finite set $\bar F\subset G$ of lifts of a given finite set $F\subset Q$,
and let $\eta_n\to 0$.
For $g\in \bar F$ define the defect set
\[
B_n(g)
:=\bigcup_{s\in S_N}\Bigl\{v\in V_n'\ \Bigm|\ 
\sigma_n(g)\sigma_n(s)v \neq \sigma_n(gsg^{-1})\sigma_n(g)v \Bigr\}.
\]
By asymptotic multiplicativity of $\sigma_n$, for each fixed $g\in \bar F$ there exists a sequence
$\beta_n(g)\to 0$ such that
\[
|B_n(g)|\ \le\ \beta_n(g)\,|V_n'|
\qquad\text{for all }n.
\]
Now fix $g\in\bar F$ and an $N$-component $C\in\mathcal{C}_n$.
For each component $D\in\mathcal{C}_n$ set
\[
\alpha_{C}(D):=\frac{|\sigma_n(g)C\cap D|}{|C|}.
\]
Let $D_C$ be a component maximizing $\alpha_C(D)$. We claim that for $\mu_n$-most $C$ one has $\alpha_C(D_C)\ge 1-\eta_n$ for some $\eta_n\to 0$.

Set $\beta_n:=\beta_n(g)$ and define
\[
\mathcal{C}_n^{\rm bad}
:=\Bigl\{C\in\mathcal{C}_n:\ \frac{|\sigma_n(g)C\cap B_n(g)|}{|C|}>\sqrt{\beta_n}\Bigr\}.
\]
Then, using that $\sigma_n(g)$ is a bijection of $V_n'$ and the definition of $\mu_n$,
\[
\mu_n(\mathcal{C}_n^{\rm bad})
\le \frac{1}{\sqrt{\beta_n}}\sum_{C\in\mathcal{C}_n}\mu_n(\{C\})\frac{|\sigma_n(g)C\cap B_n(g)|}{|C|}
= \frac{1}{\sqrt{\beta_n}}\cdot \frac{|B_n(g)|}{|V_n'|}
\le \sqrt{\beta_n}.
\]
In particular, after discarding $\mathcal{C}_n^{\rm bad}$ we may assume that
\[
|\sigma_n(g)C\cap B_n(g)|\ \le\ \sqrt{\beta_n}\,|C|
\qquad\text{for all remaining }C\in\mathcal{C}_n.
\]

Now fix such a component $C$ and let $D\in\mathcal{C}_n$ be arbitrary, and set
$U:=\sigma_n(g)C\cap D$.
Every $S_N$-edge in $D$ leaving $U$ corresponds (via $\sigma_n(g)^{-1}$) to an
$S_N$-edge in $C$ whose image under $\sigma_n(g)$ does {not} stay inside $D$.
By definition of $B_n(g)$, this can only happen at vertices of $\sigma_n(g)C$
that lie in $B_n(g)$. Therefore the number of such boundary edges is at most
\[
|S_N|\cdot |\sigma_n(g)C\cap B_n(g)|
\ \le\ |S_N|\sqrt{\beta_n}\,|C|.
\]
Since $D$ has Cheeger constant at least $c_N$, we also have
\[
|\partial_D U|\ \ge\ c_N\min\{|U|,\ |D\setminus U|\}.
\]
Combining the two inequalities yields
\[
\min\{|U|,\ |D\setminus U|\}\ \le\ \frac{|S_N|}{c_N}\sqrt{\beta_n}\,|C|.
\]
Choosing $D=D_C$ and writing $\eta_n:=\frac{|S_N|}{c_N}\sqrt{\beta_n}$ (so $\eta_n\to 0$),
we obtain
\[
\alpha_C(D_C)=\frac{|U|}{|C|}\ \ge\ 1-\eta_n
\]
for all $C\in\mathcal{C}_n\setminus\mathcal{C}_n^{\rm bad}$.
Since $\mu_n(\mathcal{C}_n^{\rm bad})\le \sqrt{\beta_n}\to 0$, this proves the claim.

Define $\bar\sigma_n(\pi(g))\in{\rm Sym}(\mathcal{C}_n)$ by
\[
\bar\sigma_n(\pi(g))(C):=D_C
\]
on the good components, and extend arbitrarily to a permutation of $\mathcal{C}_n$.
Since $\sigma_n(g)$ is a bijection of $V_n'$, the induced permutation
$\bar\sigma_n(\pi(g))$ is almost $\mu_n$-preserving in the sense that
\[
\bigl\|(\bar\sigma_n(\pi(g)))_*\mu_n-\mu_n\bigr\|_{\mathrm{TV}}
\xrightarrow[n\to\infty]{} 0
\qquad \forall g\in\bar F.
\]
Moreover, by the construction of $\bar \sigma_n$ and asymptotic multiplicativity of
$\sigma_n$, one also has measured almost multiplicativity:
\[
d_{\mu_n}\bigl(\bar\sigma_n(q_1q_2),\bar\sigma_n(q_1)\bar\sigma_n(q_2)\bigr)
\xrightarrow[n\to\infty]{} 0
\qquad \forall q_1,q_2,q_1q_2\in F.
\]

Fix $q\in F\setminus\{e\}$ and a lift $g\in\bar F$ with $\pi(g)=q$.
For the Bernoulli shift, the random pair $\bigl(x(e),(q\cdot x)(e)\bigr)$ is
uniform on $\{0,1\}^2$, hence
\[
\nu\bigl(\{x: x(e)=1,\ (q\cdot x)(e)=0\}\bigr)=\frac14.
\]
Since $(\sigma_n,\varphi_n)$ witnesses soficity of the action, there exists a
sequence $\varepsilon_n\to 0$ such that
\[
\left|
\frac{1}{|V_n'|}\Bigl|\{v\in V_n':\ v\in A_n,\ \sigma_n(g)v\notin A_n\}\Bigr|
-\frac14
\right|
\le \varepsilon_n.
\]
Since $|A_n\triangle A_n'|/|V_n'|\to 0$, there exists another sequence
$\varepsilon_n'\to 0$ such that
\[
\frac{1}{|V_n'|}\Bigl|\{v\in V_n':\ v\in A_n,\ \sigma_n(g)v\notin A_n'\}\Bigr|
\ \ge\ \frac14-\varepsilon_n-\varepsilon_n'.
\]

If $C\in\mathcal{C}_n$ is such that $\bar\sigma_n(q)C=C$, then by the defining
property of $\bar\sigma_n(q)$ there exists a sequence $\eta_n\to 0$ such
that
\[
\Bigl|\{v\in C:\ \sigma_n(g)v\notin C\}\Bigr|\ \le\ \eta_n\,|C|.
\]
Moreover, since $A_n'$ is a union of $N$-components and $C$ is a component, for every $v\in C$ with $\sigma_n(g)v\in C$ we have $\sigma_n(g)v\notin A_n' \ \Longrightarrow\ v\notin A_n'$. We set $\mathcal C_n(q) := \{C \in \mathcal C_n \mid \bar \sigma_n(q)C = C \}$. We obtain,
\[
\{v\in V_n':\ v\in A_n,\ \sigma_n(g)v\notin A_n'\}
\subseteq (A_n\setminus A_n')\ \cup\ E_n\ \cup\ \bigsqcup_{\substack{C\not\in\mathcal{C}_n(q)}} C,
\]
where $E_n:=\bigsqcup_{\substack{C\in\mathcal{C}_n(q)}}
\{v\in C:\ \sigma_n(g)v\notin C\}$ satisfies $|E_n|\le \eta_n\,|V_n'|$.

Dividing by $|V_n'|$ and using $|A_n\setminus A_n'|\le |A_n\triangle A_n'|\le
\varepsilon_n'|V_n'|$, we obtain
\[
\mu_n\bigl(\{C\in\mathcal{C}_n:\ \bar\sigma_n(q)C\neq C\}\bigr)
\ \ge\ \frac14-\varepsilon_n-2\varepsilon_n'-\eta_n.
\]

Thus, we have obtained, on $(\mathcal{C}_n,\mu_n)$, a measured permutation model of $Q$ on $F$
that is (i) almost multiplicative in $d_{\mu_n}$, (ii) almost $\mu_n$-preserving,
and (iii) separates nontrivial $q\in F$ by moving $\mu_n$-mass $\ge 1/4+o(1)$.
Applying the usual product trick (taking $k$-fold products for $k$ large enough)
we may assume that each $q\in F\setminus\{e\}$ moves $\mu_n$-mass at least $1-\varepsilon$.

Now applying Lemma~\ref{lem:uniformize-measure} to $(\mathcal{C}_n,\mu_n)$ and the maps
$\bar\sigma_n|_F$ with large $N$ (depending on $n$) so that $|\mathcal{C}_n|/N\to 0$, we
obtain genuine permutation models on finite sets in the ordinary Hamming metric.
Diagonalizing over finite subsets $F \subset Q$ yields a sofic approximation of $Q$.
\end{proof}

\begin{remark} For the previous result to work, one could start with an arbitrary free p.m.p.\ $Q$-action; however by Ab{\'e}rt-Weiss \cite{AbertWeiss13}, any such action is weakly contains the Bernoulli action. So it is enough to consider the Bernoulli action.
\end{remark}

The next observation describes a dichotomy for sofic approximations in the
presence of a Kazhdan normal subgroup $N$ and a simple quotient $Q$. It is morally very close
to Proposition~\ref{prop:sofic-shift}, but uses Kun's expander
decomposition in a more structural way.

The following definition might be of independent interest and has been discussed informally on various occasions.

\begin{definition}
We say that $G$ has property \emph{sofic-$(\tau)$} if every sofic approximation $(\sigma_n\colon G\to\mathrm{Sym}(V_n))_n$ of $G$ is, after passing to a
subsequence, essentially a disjoint union of expanders in the sense of Definition \ref{def:ess-disjoin}.
\end{definition}

The following proposition summarizes seminal work of Kun \cite{KunSoficT} with some observations using a Theorem of Schmidt \cite{Schmidt81}.

\begin{proposition}\label{prop:sofict-and-T} Let $G$ be a group.
\begin{enumerate}[label=\textup{(\roman*)}]
\item If $G$ has Kazhdan's property {\rm (T)}, then $G$ has property sofic-$(\tau)$.
\item Assume that every p.m.p.\ action of a countable group $G$ is sofic.
If $G$ has property sofic-$(\tau)$, then $G$ has Kazhdan's property {\rm (T)}.
\end{enumerate}
\end{proposition}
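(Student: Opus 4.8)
For part (i), the plan is simply to invoke Kun's theorem. It asserts that \emph{every} sofic approximation of a Kazhdan group is essentially a disjoint union of expanders, with a uniform Cheeger constant controlled by the Kazhdan constant of a generating set. Since this already holds for the whole sequence, the weaker subsequential conclusion in the definition of sofic-$(\tau)$ follows a fortiori, and no further argument is needed.

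For part (ii) I would argue by contraposition: assuming $G$ does not have property (T), I produce a single sofic approximation that is not, even subsequentially, a disjoint union of expanders. By Schmidt's theorem the failure of (T) yields an \emph{ergodic} p.m.p.\ action $G\curvearrowright(X,\mu)$ that is not strongly ergodic; fix a nontrivial asymptotically invariant sequence $(A_k)$ with $\mu(A_k)\to 1/2$ and $\max_{s\in S}\mu(sA_k\triangle A_k)\to 0$. Since by hypothesis every p.m.p.\ action of $G$ is sofic, this action is sofic; let $(\sigma_n\colon G\to\mathrm{Sym}(V_n),\ \varphi_n\colon V_n\to X)$ be a modelling sequence. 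Diagonalising over $k$, set $W_n:=\varphi_n^{-1}(A_{k(n)})$, so that $|W_n|/|V_n|\to 1/2$ while the $S$-edge boundary of $W_n$ is $o(|V_n|)$. Equivalently $f_n:=\mathbf{1}_{W_n}-1/2$ satisfies $\|f_n\|_2^2=1/4$, is mean zero, and is almost invariant for the Koopman representation, i.e.\ $\langle (I-M_n)f_n,f_n\rangle\to 0$ where $M_n=\frac{1}{|S\cup S^{-1}|}\sum_{s}\sigma_n(s)$.

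The heart of the argument is to show that such $(\sigma_n)$ cannot be a disjoint union of $\varepsilon$-expanders. Suppose it were, after deleting $E_n$ with $|E_n|=o(|V_n|)$ and passing to a subsequence. Let $P_n$ be the orthogonal projection onto the subspace $\mathcal{K}_n$ of functions constant on connected components; note $\mathcal{K}_n=\ker(I-M_n)$, and each component is exactly $\sigma_n(G)$-invariant, so $\mathcal{K}_n$ consists of invariant functions of the finite model. The uniform Cheeger constant $\varepsilon$ gives a uniform spectral gap of $M_n$ at $1$ on $\mathcal{K}_n^{\perp}$, whence almost invariance forces $\|f_n-P_nf_n\|\to 0$. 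For the complementary estimate I would use \emph{ergodicity}: writing $L_n=(I+M_n)/2$ for the lazy walk (spectrum in $[0,1]$), the quantity $\langle f_n,L_n^m f_n\rangle$ decreases in $m$ to $\|P_nf_n\|^2$, so $\|P_nf_n\|^2\le \langle f_n,L_n^m f_n\rangle$ for every fixed $m$. Because $f_n$ is a genuine pullback, $\langle f_n,L_n^m f_n\rangle$ is an $m$-local statistic of the coloured model, so it converges as $n\to\infty$ to the honest correlation $\langle f,L_X^m f\rangle$, and letting $m\to\infty$ this tends to $\|P_X f\|^2=0$ by ergodicity ($f$ mean zero). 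Hence $\limsup_n\|P_nf_n\|^2\le \langle f,L_X^m f\rangle$ for each $m$, giving $\|P_nf_n\|\to 0$. Combining, $1/4=\|f_n\|_2^2=\|f_n-P_nf_n\|^2+\|P_nf_n\|^2\to 0$, a contradiction. Thus $(\sigma_n)$ is not a disjoint union of expanders, so $G$ fails sofic-$(\tau)$.

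The step I expect to be the main obstacle is the asymptotic orthogonality $\|P_nf_n\|\to 0$. The delicacy is that invariant structure of the finite models need \emph{not} descend from $X$ --- taking disjoint copies of a model manufactures spurious $G$-invariant sets --- so one cannot simply declare "invariant functions of the model are constant by ergodicity." What rescues the argument is precisely that $f_n$ is the pullback of a fixed mean-zero function, so its lazy-walk self-correlations in the models converge to those of $X$ and the spurious invariants are invisible to it; the cost is the double limit, where the ergodic (Cesàro) averaging in $m$ that defines $P_n$ must be interchanged with the sofic limit in $n$, uniformly over components of widely varying sizes. I would carry this out through the monotonicity of $m\mapsto\langle f_n,L_n^m f_n\rangle$ together with the convergence of empirical $m$-pattern distributions guaranteed by soficity of the action, which is exactly what makes the two limits commute in the inequality above.
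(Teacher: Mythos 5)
Part (i) is exactly the paper's argument: quote Kun's theorem and observe that the subsequential statement is weaker; nothing to add.

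For part (ii) your overall strategy coincides with the paper's (contraposition via Schmidt's theorem, soficity of the resulting non-strongly-ergodic action, contradiction with the expander decomposition), but the mechanism for the contradiction is genuinely different. The paper stays combinatorial: by Khintchine's lemma and ergodicity it picks a single group element $g_n$ with $\mu(A_n\cap g_nA_n)\le 1/4+\varepsilon_n$, transfers this one statistic to the model, and contrasts it with the fact that the almost-invariant set $B_n$ is within $o(|V_n|)$ of a union of components $B_n'$ (majority vote plus the Cheeger bound), which is exactly $\sigma_n(g_n)$-invariant since $g_n$ is a word in $S$; this forces $|B_n\cap\sigma_n(g_n)B_n|/|V_n|\to 1/2$. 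Your version replaces the witness element $g_n$ by the lazy walk operator and the majority-vote set by the spectral projection $P_n$ onto component-constant functions; ergodicity enters through the decay of $\langle f,L_X^mf\rangle$ rather than through Khintchine. These are two packagings of the same two facts (ergodicity forces decorrelation; uniform expansion forces almost-invariant sets to be essentially unions of components), and your spectral version is correct in principle, arguably cleaner in the gap estimate $\|f_n-P_nf_n\|\to 0$, at the price of the operator formalism.

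The one place where your write-up is not yet a proof is the inequality $\limsup_n\|P_nf_n\|^2\le\langle f,L_X^mf\rangle$: there is no single function $f$ here, because $f_n$ pulls back $\mathbf{1}_{A_{k(n)}}-\tfrac12$ and the sets $A_k$ \emph{must} vary with $n$ (for fixed $k$ the almost-invariance $\|f_n-P_nf_n\|\to 0$ fails, since $\mu(sA_k\triangle A_k)$ is a fixed positive number). Moreover the time $m$ at which $\langle f^{(k)},L_X^mf^{(k)}\rangle$ drops below a given threshold necessarily blows up with $k$, precisely because the $A_k$ are asymptotically invariant. So the statement has to be replaced by a three-parameter diagonalization: for each $k$ choose $m(k)$ with $\langle f^{(k)},L_X^{m(k)}f^{(k)}\rangle<1/k$ (ergodicity), then $n(k)$ so large that for $n\ge n(k)$ the model reproduces both the $m(k)$-step correlation and the almost-invariance of $A_k$ up to $1/k$ (soficity of the action plus asymptotic multiplicativity for words of length $\le m(k)$), and only then define $k(n)$. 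The monotonicity of $m\mapsto\langle f_n,L_n^mf_n\rangle$ that you invoke is exactly what makes this diagonalization close up, so the gap is repairable along the lines you indicate --- but as written the key display conflates the limits, and you should be aware that the paper's choice of a single $g_n$ (also picked by a diagonal argument, after $A_n$) is doing the same bookkeeping with less machinery.
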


\begin{proof}
The first claim is an immediate consequence of Kun's theorem on sofic
approximations of property {\rm (T)} groups \cite[Theorem~1]{KunSoficT}, which
asserts that for a Kazhdan group, every sofic approximation (with respect to any
fixed finite generating set) is, after discarding $o(|V_n|)$ vertices, a disjoint
union of expander components with a uniform Cheeger constant bounded away from
$0$. This is precisely property sofic-$(\tau)$.

For the second claim we argue by contraposition. Assume that $G$ does not have
property {\rm (T)}. By a theorem of Schmidt \cite{Schmidt81} (see also
\cite{Kechris12}), there exists a free ergodic p.m.p.\ action
$G\curvearrowright (X,\mu)$ which is not strongly ergodic.

Fix a finite symmetric generating set $S$ of $G$.
Since the action is not strongly ergodic, there exists a sequence of measurable
sets $A_n\subset X$ with $\mu(A_n)=1/2$ and
\[
\max_{s\in S}\mu(sA_n\triangle A_n)\longrightarrow 0.
\]

Fix any sequence $\varepsilon_n\downarrow 0$.
By Khintchine's lemma and ergodicity of the action, there exists $g_n \in G$ with
$$\mu(A_n\cap g_nA_n)\leq \mu(A_n)^2 + \varepsilon_n= \frac14+\varepsilon_n.$$ By hypothesis, the action $G\curvearrowright(X,\mu)$ is sofic.
Hence, for each $n$ there exist a sofic approximation map
$\sigma_n\colon G\to{\rm Sym}(V_n)$ and a labelling
$\varphi_n\colon V_n\to X$ such that, writing $B_n:=\varphi_n^{-1}(A_n)$,
the following hold:
$$ \lim_{n \to \infty} \frac{|B_n|}{|V_n|} = 1/2,  \qquad \lim_{n \to \infty} \max_{s\in S}\frac{|\sigma_n(s)B_n\triangle B_n|}{|V_n|} =0,$$
and 
$$\frac{|B_n\cap \sigma_n(g_n)B_n|}{|V_n|} \le \frac14+2\varepsilon_n.$$

Set
\[
\delta_n:=\sum_{s\in S}\frac{|\sigma_n(s)B_n\triangle B_n|}{|V_n|}\,,
\qquad\text{so }\delta_n\to 0.
\]
In the $S$--Schreier graph on $V_n$, the number of $S$-edges leaving $B_n$
is at most $\delta_n|V_n|$.

Assume towards a contradiction that the sofic approximation $(\sigma_n)_n$
is essentially a disjoint union of expanders,
so after discarding $o(|V_n|)$ vertices we may assume the $S$--Schreier graph on
$V_n$ decomposes as a disjoint union of connected components
\[
V_n=\bigsqcup_{C\in\mathcal C_n} C
\]
each having Cheeger constant at least some fixed $c>0$,independent of $n$ and $C$.

For each $C\in\mathcal C_n$ set $B_{n,C}:=B_n\cap C$.
Then the edge boundary inside $C$ satisfies
\[
|\partial_C(B_{n,C})|\ \ge\ c\,\min\{|B_{n,C}|,\ |C\setminus B_{n,C}|\}.
\]
Summing over components gives
\[
\delta_n|V_n|\ \ge\ |\partial(B_n)|
=\sum_{C\in\mathcal C_n}|\partial_C(B_{n,C})|
\ \ge\ c\sum_{C\in\mathcal C_n}\min\{|B_{n,C}|,\ |C\setminus B_{n,C}|\}.
\]
Define $B_n'\subset V_n$ to be the union of those components $C$ with
$|B_{n,C}|\ge |C|/2$. Then
\[
|B_n\triangle B_n'|
=\sum_{C\in\mathcal C_n}\min\{|B_{n,C}|,\ |C\setminus B_{n,C}|\}
\ \le\ \frac{\delta_n}{c}\,|V_n|
=o(|V_n|).
\]

Now, since $g_n$ is a word in the generating set $S$, the permutation $\sigma_n(g_n)$
preserves each connected $S$-component. Hence $B_n'$ is $\sigma_n(g_n)$-invariant, i.e. $\sigma_n(g_n)B_n'=B_n'.$ Therefore, we obtain
\[
|B_n\cap\sigma_n(g_n)B_n|
\ \ge\ |B_n'\cap\sigma_n(g_n)B_n'|-2|B_n\triangle B_n'|
\ =\ |B_n'|-2|B_n\triangle B_n'|.
\]
Dividing by $|V_n|$ and using $|B_n'|/|V_n|=|B_n|/|V_n|+o(1)\to 1/2$, we obtain
\[
\liminf_{n\to\infty}\frac{|B_n\cap\sigma_n(g_n)B_n|}{|V_n|}
\ \ge\ \frac12.
\]
But by construction, we also have
\[
\frac{|B_n\cap\sigma_n(g_n)B_n|}{|V_n|} \le \frac14+2\varepsilon_n,
\]
which is a contradiction for $n\to\infty$.
Thus the sofic approximation $(\sigma_n)_n$ cannot be essentially a disjoint union
of expanders. This contradicts the assumption that $G$ has property sofic-$(\tau)$.
\end{proof}

\begin{proposition}\label{prop:dichotomy-simple}
Let
\[
1\longrightarrow N\longrightarrow G\xrightarrow{\pi} Q\longrightarrow 1
\]
be a short exact sequence with $N$ Kazhdan
and $Q$ an infinite simple group. Then, $G$ is sofic-$(\tau)$ or $Q$ is sofic.
\end{proposition}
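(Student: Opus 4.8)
The plan is to argue by contraposition: assuming $Q$ is not sofic, I will show that $G$ has property sofic-$(\tau)$. So let $(\sigma_n\colon G\to\mathrm{Sym}(V_n))_n$ be an arbitrary sofic approximation of $G$, fix a finite symmetric generating set $S$ of $G$ containing a generating set $S_N$ of $N$, and write $\pi(S)$ for the induced generating set of $Q$. Restricting $\sigma_n$ to $N$ yields a sofic approximation of the Kazhdan group $N$, so by Kun's theorem \cite[Theorem~1]{KunSoficT} I may discard a set $E_n\subset V_n$ with $|E_n|=o(|V_n|)$ so that the $N$-Schreier graph on $V_n'=V_n\setminus E_n$ is a disjoint union $\bigsqcup_{C\in\mathcal C_n}C$ of $N$-components, each an $S_N$-expander with a uniform Cheeger constant $c_N>0$. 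Exactly as in the proof of Proposition~\ref{prop:sofic-shift}, expansion of the target components lets me define a measured permutation model $\bar\sigma_n\colon Q\to\mathrm{Sym}(\mathcal C_n)$ on the probability space $(\mathcal C_n,\mu_n)$, where $\mu_n(\{C\})=|C|/|V_n'|$, which is almost multiplicative in $d_{\mu_n}$ and almost $\mu_n$-preserving, and whose definition is independent of the chosen lifts up to $o(1)$ since $N$ fixes each component.

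Next I would isolate the asymptotic kernel of this model. Fixing a non-principal ultrafilter $\omega$, set $m(q):=\lim_{n\to\omega}\mu_n(\{C:\bar\sigma_n(q)C\neq C\})$ for $q\in Q$, and let $\mathcal N:=\{q\in Q:m(q)=0\}$. Almost multiplicativity gives $m(q_1q_2)\le m(q_1)+m(q_2)$ and $m(q^{-1})=m(q)$, so $\mathcal N$ is a subgroup; since $\bar\sigma_n(gqg^{-1})$ differs from $\bar\sigma_n(g)\bar\sigma_n(q)\bar\sigma_n(g)^{-1}$ by a vanishing $d_{\mu_n}$-defect and conjugation preserves $\mu_n$ asymptotically, one obtains $m(gqg^{-1})=m(q)$, so $\mathcal N$ is normal. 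As $Q$ is simple, $\mathcal N=\{e\}$ or $\mathcal N=Q$.

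The case $\mathcal N=\{e\}$ cannot occur under the standing assumption. Indeed, if every nontrivial $q$ satisfies $m(q)>0$, then for each finite $F\subset Q$ and each $\varepsilon>0$ I choose $n$ (along $\omega$) with the multiplicativity and measure-preservation defects small and $\mu_n(\{C:\bar\sigma_n(q)C\neq C\})$ bounded below for all $q\in F\setminus\{e\}$; passing to a large $k$-fold product $\bar\sigma_n^{\otimes k}$ on $(\mathcal C_n^{\,k},\mu_n^{\otimes k})$ boosts the moved $\mu_n^{\otimes k}$-mass of each $q\in F\setminus\{e\}$ above $1-\varepsilon$ while keeping the defects small, and Lemma~\ref{lem:uniformize-measure} converts this measured model into an ordinary Hamming model on a finite set. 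Diagonalizing over an exhaustion of $Q$ would then produce a genuine sofic approximation of $Q$, contradicting the hypothesis that $Q$ is not sofic. Hence $\mathcal N=Q$.

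Finally, in the case $\mathcal N=Q$ I would show that $(\sigma_n)$ is, along a subsequence, essentially a disjoint union of expanders. Since $\pi(S)$ is finite and $m(\pi(s))=0$ for every $s\in S$, I may pass to a subsequence along which $\mu_n(\{C:\bar\sigma_n(\pi(s))C\neq C\})\to0$ for all $s\in S$ simultaneously. For a component $C$ fixed by $\bar\sigma_n(\pi(s))$ the defining overlap estimate of Proposition~\ref{prop:sofic-shift} forces $\sigma_n(s)$ to map all but an $\eta_n$-fraction of $C$ back into $C$; summing over the finitely many generators and over the $\mu_n$-negligible collection of non-fixed components shows that only $o(|V_n'|)$ of the $S$-edges leave the $N$-components. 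Enlarging $E_n$ by the $o(|V_n|)$ endpoints of these stray edges, the $S$-Schreier graph on the remainder decomposes into subsets of the $N$-components, each carrying all of its internal $S_N$-edges and possibly more, so that $(\sigma_n)$ is essentially a disjoint union of expanders in the sense of Definition~\ref{def:ess-disjoin}; as $(\sigma_n)$ was arbitrary, $G$ has property sofic-$(\tau)$. The main obstacle I anticipate is precisely this last expander-robustness step: one must verify that trimming an $\eta_n\to0$ proportion from an $S_N$-expander leaves an $S$-expander whose Cheeger constant stays bounded away from $0$ uniformly in $n$ and $C$, and that the definition of being essentially a disjoint union of expanders absorbs these $o(|V_n|)$ modifications; the normality of $\mathcal N$ and the lift-independence of $\bar\sigma_n$ likewise require careful bookkeeping of the vanishing $d_{\mu_n}$-defects.
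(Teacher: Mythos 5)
Your argument is correct and follows essentially the same route as the paper: Kun's expander decomposition for the Kazhdan subgroup $N$, the induced measured almost-action of $Q$ on the component space $(\mathcal C_n,\mu_n)$, the observation that its asymptotic kernel is normal so that simplicity forces a dichotomy, and, in the trivial-action case, the conclusion that the $S_G$-Schreier graph essentially respects the $N$-components. The only cosmetic difference is the order of operations (you extract the normal subgroup directly from the measured model via an ultrafilter, whereas the paper first applies Lemma~\ref{lem:uniformize-measure} and then reads off the kernel of a metric-ultraproduct homomorphism); the expander-robustness point you flag at the end is likewise present, and treated no more carefully, in the paper's own proof.
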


\begin{proof}
If $G$ is non-sofic, then it is vacuously sofic-$(\tau)$ by definition, so assume that
$G$ is sofic and let $(\sigma_n\colon G\to{\rm Sym}(V_n))_n$ be a sofic
approximation of $G$.
Fix a finite symmetric generating set $S_N$ of $N$.
By Kun's theorem \cite[Theorem~1]{KunSoficT}, after discarding $o(|V_n|)$ vertices
we may assume that the $N$--Schreier graph on
\[
V_n' \;=\; \bigsqcup_{C\in\mathcal{C}_n} C
\]
decomposes into connected components $C\in\mathcal{C}_n$ which form an expander
family with uniform Cheeger constant $c_N>0$.
Equip $\mathcal{C}_n$ with the probability measure
\[
\mu_n(\{C\}) \;:=\; \frac{|C|}{|V_n'|}.
\]
Fix a finite set $\bar F\subset G$ and let $F:=\pi(\bar F)\subset Q$.
For $g\in\bar F$ define the conjugacy defect set
\[
B_n(g)
:=\bigcup_{s\in S_N}\Bigl\{v\in V_n'\ \Bigm|\
\sigma_n(g)\sigma_n(s)v \neq \sigma_n(gsg^{-1})\sigma_n(g)v \Bigr\}.
\]
Asymptotic multiplicativity gives $|B_n(g)|=o(|V_n'|)$.
For fixed $g\in\bar F$ and $C\in\mathcal{C}_n$, define $\alpha_C(D)$ and the
majority component $D_C$ exactly as in the proof of
Proposition~\ref{prop:sofic-shift}. By expansion, after discarding a set of
components of negligible $\mu_n$-measure we may assume that
\[
\frac{|\sigma_n(g)C\cap D_C|}{|C|}\ \ge\ 1-\eta_n
\qquad \text{ for some } \eta_n\to 0.
\]
Define $\bar\sigma_n(\pi(g))\in{\rm Sym}(\mathcal{C}_n)$ by $\bar\sigma_n(\pi(g))(C):=D_C$
on the good components, and extend arbitrarily to a permutation of $\mathcal{C}_n$.

Then the maps $\bar\sigma_n\colon F\to{\rm Sym}(\mathcal{C}_n)$ satisfy:
\begin{enumerate}[label=\textup{(\alph*)}]
\item measured almost multiplicativity:
\[
d_{\mu_n}\bigl(\bar\sigma_n(q_1q_2),\bar\sigma_n(q_1)\bar\sigma_n(q_2)\bigr) \xrightarrow{n\to \infty} 0,
\qquad \forall q_1,q_2,q_1q_2\in F;
\]
\item almost $\mu_n$-preservation:
\[
\bigl\|(\bar\sigma_n(q))_*\mu_n-\mu_n\bigr\|_{\mathrm{TV}}\xrightarrow{n\to \infty} 0,
\qquad \forall q\in F.
\]
\end{enumerate}
Moreover, since $N$ acts trivially on the set of $N$-components, the induced map
depends only on $\pi(g)$ up to $o(1)$ in $d_{\mu_n}$.

Using Lemma~\ref{lem:uniformize-measure}, we may replace the measured almost action
$\bar\sigma_n\colon F\to{\rm Sym}(\mathcal{C}_n)$ on $(\mathcal{C}_n,\mu_n)$
by an almost action of $F$ on a finite set $\bar X_n$ equipped with the uniform
measure, where the defects are measured in the uniform Hamming metric.

Passing to a diagonal sequence over finite subsets $F \subset Q$, this yields a sequence of maps
$\widetilde\sigma_n\colon Q\to{\rm Sym}(\bar X_n)$ which is asymptotically
multiplicative. Equivalently, we obtain a homomorphism
\[
\widetilde\sigma\colon Q\longrightarrow \prod_{\mathcal U}({\rm Sym}(\bar X_n),d_{\rm Ham})
\]
into a metric ultraproduct, and its kernel is a normal
subgroup of $Q$. Since $Q$ is simple, this kernel is either $\{e\}$ or $Q$.
In the first case $\widetilde\sigma$ is injective, hence $Q$ is sofic.

If $Q$ is not sofic, all the ultraproduct homomorphisms are trivial, which means that in the almost-action $\widetilde \sigma_n$ the Hamming length of every $q\in Q$ satisfies $\ell_n(\widetilde \sigma_n(q))\to 0$, which in particular implies that the $\mu_n$-mass of components moved by $\bar\sigma_n(q)$ tends to $0$. We now
continue under this assumption. 

Fix a finite symmetric generating set $S_G$ of $G$ containing $S_N$; for $g\in S_G$ we let $q=\pi(g)$. 
By definition of $\bar\sigma_n(q)$ we now see that the set
\[
E_{n,g}:=\{v\in V_n':\ \text{$\sigma_n(g)v$ lies in a different $N$-component than $v$}\}
\]
satisfies $|E_{n,g}|=o(|V_n'|)$. Let $E_n:=\bigcup_{g\in S_G}E_{n,g}$ and set $V_n'':=V_n'\setminus E_n$.
Then $|V_n''|=(1-o(1))|V_n|$ and every generator $g\in S_G$ maps each $N$-component
contained in $V_n''$ into itself. Hence the $G$--Schreier graph on $V_n''$
(with respect to $S_G$) is a disjoint union of finite connected graphs indexed by
$N$-components.

On each such component $C$, the $S_N$-edges form an expander with Cheeger constant
$\ge c_N$, and adding further edges (those from $S_G\setminus S_N$) cannot decrease
the Cheeger constant. Thus, after discarding $o(|V_n|)$ vertices, the Schreier graphs
of $(\sigma_n)_n$ decompose as a disjoint union of expanders with a uniform Cheeger
constant, and since the approximation we started with was arbitrary, we conclude that $G$ has property sofic-$(\tau)$.
\end{proof}

\begin{remark}
The preceding result has a peculiar consequence. Suppose that $Q$ is finitely presented, simple, and not Kazhdan. First of all, $G$ cannot be Kazhdan either. Now, by the aforementioned theorem of Schmidt, every countable group without property {\rm (T)}
admits a free ergodic p.m.p.\ action that is not strongly ergodic
-- see, for instance, \cite{Schmidt81, Kechris12} and the references therein.
If such an action were known to be sofic in the sense of Bowen, it would imply that $G$ is not sofic-$(\tau)$ and hence $Q$ must be sofic.
\end{remark}

\section{Proof of Theorem \ref{thm:A} and its corollaries}
\label{sec:Rips}

What makes the results that we obtained so far interesting is that there is a variety of group extensions, where the kernel is finitely generated or even Kazhdan.
We recall the two versions of the Rips construction that drive the applications
below.

\begin{theorem}[Wise, see \cite{WiseRipsRF}]
\label{thm:Wise}
For every finitely presented group $Q$ there exists a short exact sequence
\[
1\to N\to G\to Q\to 1
\]
such that $G$ is hyperbolic and residually finite, and $N$ is finitely
generated.
\end{theorem}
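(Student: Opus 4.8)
The plan is to realize $Q$ as the quotient of a small cancellation group by a two–generated normal subgroup, following Rips's scheme, and then to enforce residual finiteness by Wise's refinement of the relators. First I would fix a finite presentation $Q=\langle x_1,\dots,x_n\mid R_1,\dots,R_m\rangle$ and adjoin two new generators $a_1,a_2$. The group $G$ is presented on $x_1,\dots,x_n,a_1,a_2$ with three families of relations: conjugation relations $x_i^{-1}a_kx_i=W_{i,k}^{+}$ and $x_ia_kx_i^{-1}=W_{i,k}^{-}$, together with relations $R_j=V_j$, where the $W_{i,k}^{\pm}$ and $V_j$ are words in $a_1,a_2$ still to be specified. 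Setting $N:=\langle a_1,a_2\rangle$, the conjugation relations show that each $x_i^{\pm1}$ carries $N$ into $N$, so $N$ is a finitely generated (indeed $2$-generated) normal subgroup; and killing $a_1,a_2$ collapses the auxiliary relations and recovers exactly the presentation of $Q$, so that $G/N\cong Q$.

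The next step is to secure hyperbolicity by choosing the auxiliary words $W_{i,k}^{\pm}$ and $V_j$ so that the resulting set of defining relators satisfies the metric small cancellation condition $C'(1/6)$. This is a genericity argument: one takes the words in $a_1,a_2$ long and mutually generic, so that no common piece shared between two relators exceeds a sixth of either relator length. A $C'(1/6)$ presentation yields a Gromov hyperbolic $G$, and one checks that the relations do not collapse $N$, so the sequence is a genuine extension with infinite kernel. At this level one has recovered the classical Rips construction.

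The hard part is residual finiteness, which is the substance of Wise's contribution and is \emph{not} a formal consequence of small cancellation alone -- it is in fact open whether every hyperbolic group is residually finite. Here I would not pick arbitrary random words but engineer the relators so that $G$ becomes the fundamental group of a compact nonpositively curved cube complex that is \emph{special} in the sense of Haglund--Wise; equivalently, so that $G$ acts properly and cocompactly on a CAT(0) cube complex whose hyperplanes are embedded, two-sided and non-self-osculating. Once $G$ is virtually special, the subgroup separability of special cube complexes delivers residual finiteness at once.

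The main obstacle is exactly the tension between the two design goals: small cancellation wants long, unstructured relators, whereas the special cubical structure wants relators with enough controlled geometry that the associated hyperplanes satisfy the specialness conditions. Wise's technical achievement is an explicit choice of the $W_{i,k}^{\pm}$ and $V_j$ meeting both constraints simultaneously; the remaining verifications -- that $N$ stays two-generated and normal, that $G/N\cong Q$, that the presentation is $C'(1/6)$, and that the cube complex is special -- are then bookkeeping around that single choice.
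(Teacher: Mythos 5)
The paper does not prove this statement: it is imported verbatim from Wise's article \cite{WiseRipsRF}, so the only comparison available is with Wise's original argument. Your first two paragraphs are correct and match how that argument begins: adjoin auxiliary generators $a_1,a_2$, impose conjugation relations so that $N=\langle a_1,a_2\rangle$ is a $2$-generated normal subgroup with $G/N\cong Q$, and choose the auxiliary words long and mutually generic so that the presentation is $C'(1/6)$, hence $G$ is hyperbolic. This is the classical Rips skeleton and needs no change.

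The gap is in the residual finiteness step, which you rightly identify as the entire content of the theorem but then do not actually supply. You propose to ``engineer the relators so that $G$ becomes the fundamental group of a compact nonpositively curved cube complex that is special'' and describe the remaining verifications as ``bookkeeping''; but you give no construction of such relators, no reason that a simultaneously $C'(1/6)$ and special choice exists, and no method for verifying specialness of the resulting complex --- and this is exactly where all the difficulty lives. The route is also anachronistic: special cube complexes were introduced by Haglund--Wise in 2008, five years after \cite{WiseRipsRF}. Wise's actual proof arranges the presentation so that $G$ is the fundamental group of a compact negatively curved square complex of a very restricted combinatorial type (a ``clean'' VH-complex), to which his earlier separability and residual finiteness results for such complexes apply; the choice of auxiliary words is dictated by that combinatorial structure, not by hyperplane conditions. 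If you insist on a cubical proof with modern technology, the honest chain is: finitely presented $C'(1/6)$ groups act properly cocompactly on CAT(0) cube complexes (Wise, 2004), cubulated hyperbolic groups are virtually special by Agol's theorem, and virtually special groups are residually finite --- in which case no engineering of the relators is needed at all, but the key input (Agol's theorem) is very far from bookkeeping. As written, your proposal does not establish residual finiteness.
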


\begin{theorem}[Belegradek--Osin, see \cite{BelegradekOsin08}]
\label{thm:BO}
Let $Q$ be finitely presented and let $H$ be a non-elementary hyperbolic group.
Then there exists a short exact sequence
\[
1\to N\to G\to Q\to 1
\]
with $G$ hyperbolic and $N$ a quotient of $H$.
In particular, if $H$ is Kazhdan, then $N$ is Kazhdan.
\end{theorem}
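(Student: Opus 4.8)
The plan is to realize this as a refined Rips construction in which the usual free ``auxiliary'' generators are replaced by a generating set of the prescribed hyperbolic group $H$, and then to control hyperbolicity by small cancellation theory carried out over a hyperbolic, rather than free, ambient group. Fix a finite presentation $Q=\langle x_1,\dots,x_k\mid r_1,\dots,r_m\rangle$ and a finite symmetric generating set $\{h_1,\dots,h_p\}$ of $H$. The ambient group to work in is the free product
\[
\Gamma \;=\; H * F_k, \qquad F_k=\langle x_1,\dots,x_k\rangle,
\]
which is hyperbolic since a free product of finitely many hyperbolic groups is hyperbolic. I would then build $G$ by imposing two families of relators taking values in $H$: conjugation relators $x_i^{\epsilon}h_s x_i^{-\epsilon}(U_{i,s}^{\epsilon})^{-1}$ (for $1\le i\le k$, $1\le s\le p$, $\epsilon=\pm 1$) forcing each $x_i$-conjugate of a generator of $H$ back into $H$, and lifting relators $r_j\,W_j^{-1}$ (for $1\le j\le m$) expressing each relator of $Q$ as a word $W_j$ in $H$. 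Here the $U_{i,s}^{\epsilon}$ and $W_j$ are words in $\{h_1,\dots,h_p\}$ to be chosen below.

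Granting a suitable choice of these words, the algebraic bookkeeping is routine. Let $N\le G$ be the image of the subgroup $H\le\Gamma$. The conjugation relators say precisely that $N$ is invariant under conjugation by every $x_i^{\pm 1}$; since $N$ is obviously invariant under conjugation by $H$, and the $x_i^{\pm1}$ together with $H$ generate $G$, the subgroup $N$ is normal. Killing $N$ turns every $U_{i,s}^{\epsilon}$ and $W_j$ into the identity, so $G/N$ acquires the presentation $\langle x_1,\dots,x_k\mid r_1,\dots,r_m\rangle=Q$; this identifies the short exact sequence. Moreover $N$ is by definition the image of $H$ under the quotient $\Gamma\twoheadrightarrow G$, hence a quotient of $H$. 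The final clause is then immediate: a quotient of a group with property \textup{(T)} again has property \textup{(T)}, so $H$ Kazhdan forces $N$ Kazhdan.

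The entire content of the theorem is therefore concentrated in choosing the words $U_{i,s}^{\epsilon}$ and $W_j$ so that the quotient $G=\Gamma/\langle\langle R\rangle\rangle$ by the added relators $R$ is again hyperbolic, and this is where I expect the real difficulty to lie. The idea is to make the set $R$ satisfy a small cancellation condition relative to $\Gamma$, so that one may invoke small cancellation theory over hyperbolic groups (in the style of Olshanskii, or equivalently the relatively hyperbolic Dehn filling machinery): a symmetrized family of relators whose pairwise overlaps (``pieces'') are short compared to the relators themselves yields a hyperbolic quotient in which the relators embed quasi-isometrically. Since $H$ is non-elementary hyperbolic it contains infinitely many pairwise independent loxodromic elements, and in particular quasi-convex free subgroups; this provides a large supply of long, ``generic'' words in $\{h_1,\dots,h_p\}$ with controlled overlaps. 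I would take the $U_{i,s}^{\epsilon}$ and $W_j$ to be sufficiently long such words, pairwise in general position, so that the symmetrization of $\{x_i^{\epsilon}h_sx_i^{-\epsilon}(U_{i,s}^{\epsilon})^{-1},\ r_jW_j^{-1}\}$ meets the small cancellation hypothesis over $\Gamma$. The delicate points are that the pieces must be measured in the geometry of $\Gamma=H*F_k$ rather than of a free group, so one must rule out cancellation both within the $H$-syllables and across the free $F_k$-letters, and one must verify that the relators stay nontrivial so that the construction does not degenerate. Once the small cancellation condition is in force, hyperbolicity of $G$ follows from the standard output of the theory, while $N$ is by construction a quotient of $H$ as required; this completes the argument.
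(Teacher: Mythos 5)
This theorem is imported by the paper from the literature with only the citation to Belegradek--Osin; the paper contains no proof of its own. Your sketch correctly reproduces the strategy of the cited reference: work in $\Gamma=H*F_k$, impose Rips-type conjugation and lifting relators with values in long generic words of $H$, and invoke small cancellation (Olshanskii's quotient theory) over hyperbolic groups to keep $G$ hyperbolic, with the algebraic bookkeeping for normality of $N$, the identification $G/N\cong Q$, and the passage of property (T) to quotients all handled exactly as in the original. The one substantive point you gloss over is that Olshanskii's machinery requires $H$ to be a \emph{suitable} subgroup of $\Gamma$ (its elementary closure condition, essentially that $H$ normalizes no nontrivial finite subgroup of $\Gamma$); Belegradek--Osin arrange this by first replacing $H$ with a suitable quotient, which is harmless here precisely because the conclusion only demands that $N$ be a quotient of $H$. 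With that caveat, your proposal is an accurate (if not fully executed) reconstruction of the actual argument.
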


\begin{proof}[Proof of Theorem~\ref{thm:A}]
Part~\ref{thm:A}(i) is Theorem~\ref{thm:flex-quot}.
Part~\ref{thm:A}(ii) is Theorem~\ref{thm:quotient-sfa-kazhdan} applied to
sofic quotients.
Theorem~\ref{thm:A}(iii) is Proposition~\ref{prop:RFD-passes}.
Theorem~\ref{thm:A}(iv) is Proposition~\ref{prop:sofic-shift}.
\end{proof}

In order to apply the results we need just one example of a finitely presented sofic group that is not residually finite. In fact, there is a variety of such examples, the easiest being maybe the Baumslag-Solitar group $BS(2,3) = \langle a,b \mid ba^3=a^2b\rangle$ or the Baumslag-Gersten group $B=\langle a,t \mid a^{a^t}=a^2\rangle$, see \cite{Baumslag69}. Indeed, both groups are well-known not to be residually finite and at the same time they are sofic, being HNN-extensions of amenable subgroups. Indeed, it was shown by Elek-Szab\'o \cite{ElekSzabo11} that amalgamated free products of sofic groups over an amenable subgroup are again sofic. Now, an HNN-extension $G \ast_\varphi$ for some isomorphism $\varphi\colon H_1 \to H_2$ of amenable subgroups of $G$ is well-known to be isomorphic to a semi-direct product of $\mathbb Z$ with a group that is a direct union of amalgamated products over amenable groups; hence this case is easily reduced to amalgamated free products by a standard argument.

\begin{proof}[Proof of Corollary~\ref{cor:B}]
Apply Wise's residually finite Rips construction (Theorem~\ref{thm:Wise}) to
the $Q$, which is finitely presented, sofic and not residually finite, to obtain a hyperbolic, residually finite $G$ with finitely generated kernel.
If $G$ were flexibly P-stable, Theorem~\ref{thm:A}(i) and
Corollary~\ref{cor:flex-sofic-RF} would force $Q$ to be residually finite, a contradiction.
\end{proof}

\begin{proof}[Proof of Corollary~\ref{cor:C}]
Choose the Belegradek--Osin extension (Theorem~\ref{thm:BO}) 
for the same $Q$, Theorem~\ref{thm:A}(ii) implies that a residually
finite and stable-in-finite-actions $G$ would force $Q$ to be residually
finite, again contradicting the choice of $Q$.
Thus, if $G$ is residually finite, then it is not stable in finite actions.
\end{proof}

\begin{proof}[Proof of Corollary~\ref{cor:D}]
Choose again the Belegradek--Osin extension (Theorem~\ref{thm:BO}) with a property
{\rm (T)} hyperbolic group $H$.
Since $Q$ is finitely generated and not residually finite, $Q$ cannot be RFD; then
Theorem~\ref{thm:A}(iii) gives that $G$ is not RFD.
\end{proof}

\begin{proof}[Proof of Corollary~\ref{cor:E}]
Let $Q$ be non-sofic and apply the Belegradek--Osin construction with a Kazhdan hyperbolic group $H$ to
obtain a hyperbolic extension $1\to N\to G\to Q\to 1$ with $N$ Kazhdan.
If the quasi-regular Bernoulli shift $G\curvearrowright\{0,1\}^Q$ were sofic, then
Theorem~\ref{thm:A}(iv) would force $Q$ to be sofic, a contradiction.
Thus $G$ admits a non-sofic p.m.p.\ action.
\end{proof}

\section*{Acknowledgments}

The second author thanks the Isaac Newton Institute for its hospitality during the workshop \emph{Stability and probabilistic methods} in November 2025. ChatGPT was used to assist in drafting parts of this manuscript. All content was reviewed and substantially revised by the authors, who are responsible for the final text.

\end{document}